\documentclass[11pt]{amsart}

\usepackage[T1]{fontenc}
\usepackage{lmodern}
\usepackage{mathtools}
\usepackage{amssymb}
\usepackage{xcolor}
\usepackage{microtype}
\usepackage[a4paper,hmargin=30mm,vmargin=28mm]{geometry}
\usepackage[hidelinks]{hyperref}
\everymath{\displaystyle}
\numberwithin{equation}{section}
\newtheorem{theorem}{Theorem}[section]
\newtheorem{lemma}[theorem]{Lemma}
\newtheorem{proposition}[theorem]{Proposition}
\newtheorem{remark}[theorem]{Remark}
\newtheorem{corollary}[theorem]{Corollary}
\newtheorem{definition}[theorem]{Definition}
\allowdisplaybreaks[1]
\newcommand{\bR}{\mathbb{R}}

\newcommand{\bM}{\mathbb{M}}

\newcommand{\Mn}{\mathbb M_n}
\newcommand{\Mnp}{\mathbb M_n^+}
\newcommand{\Tr}{\operatorname{Tr}}
\newcommand{\diag}{\operatorname{diag}}
\newcommand{\spec}{\sigma}
\newcommand{\norm}[1]{\lVert #1\rVert}

\newcommand{\smod}[1]{\lvert #1\rvert_{\mathrm{sym}}}

\title[Solutions to some open problems]{Solutions to some open problems in matrix analysis}
\author{Mohamed Amine Aouichaoui}
\address{Department of Mathematics, Faculty of Sciences of Monastir, University of Monastir, 5019 Monastir, Tunisia}
\email{amine.aouichaoui@fsm.rnu.tn}

\author{Eun-Young Lee}
\address{Department of Mathematics, KNU-Center for Nonlinear Dynamics,
Kyungpook National University,
 Daegu 702-701, Korea}
\email{eylee89@knu.ac.kr}

\subjclass[2020]{Primary 15A42; Secondary 15A18, 15A60, 47A30}
\keywords{Unitary orbit, Horn inequalities, reciprocal power limit, positive block matrix, symmetric modulus}

\begin{document}

%\begin{abstract}
%This paper deals with several open questions in matrix analysis. We solve two questions of Audenaert and Kittaneh: (1) we improve the Aujla--Bourin subadditivity inequality, and (2) we show that a Hlawka-type inequality for the trace norm fails. We also address several questions posed by Bourin and the second author: (3) we improve the reciprocal Lie--Trotter theorem of Audenaert and Hiai; (4) we establish the sharpness of a series of inequalities, for instance, given three contractions $A,B,C$, we have the triangle inequality
%$$
%|A+B+C| \leq \frac{3}{4} I + |A|+|B|+|C|,
%$$
%where $3/4$ is optimal; and (5) we obtain an elegant eigenvalue inequality involving the symmetric modulus $(|X|+|X^*|)/2$, thereby contributing to the study of symmetric moduli initiated by Bourin, Lee, and  Zhang.
%\end{abstract}

\begin{abstract} We solve a problem posed by Audenaert and Kittaneh in 2012 by proving
that the Aujla--Bourin subadditivity inequality can be strengthened
to an optimal form. We also answer
questions posed by Bourin and the second author:
(1)
we significantly extend both the reciprocal Lie--Trotter theorem
of Audenaert and Hiai and Kato's limit theorem;

(2) we determine the best constants in inequalities for sums
of contractions and positive block matrices, including the
sharp triangle inequality
$$
|A+B+C| \leq \frac{3}{4} I + |A|+|B|+|C|
$$
for three contractions $A,B,C$; and
(3) 
we prove a remarkable eigenvalue inequality for the symmetric
modulus, thereby determining the best dimension-free lower bound
in a question studied by Bourin, Lee, and Zhang.

\end{abstract}

\maketitle

\section{Introduction}

Let $\Mn$ denote the algebra of complex $n\times n$ matrices,
and let $\Mnp$ be its positive semidefinite cone. For $X\in\Mn$,
we write
\[
 |X|=(X^*X)^{1/2},
 \qquad
 \smod{X}=\frac{|X|+|X^*|}{2}
\]
for its right modulus and symmetric modulus, respectively.
We use $\norm{X}$ for the operator norm and
$\|X\|_2=(\Tr(X^*X))^{1/2}$ for the Frobenius norm, where
$\Tr$ denotes the trace. A matrix $X$ is a contraction if
$\norm{X}\leq1$.

The eigenvalues of a Hermitian matrix $T\in\Mn$ are counted
with multiplicity and arranged in nonincreasing order:
\[
 \lambda_1(T)\geq\cdots\geq\lambda_n(T),
 \qquad
 \lambda(T)=(\lambda_1(T),\ldots,\lambda_n(T)).
\]
For Hermitian matrices $S,T$ of the same size, $S\leq T$
means that $T-S$ is positive semidefinite. There exists
a unitary matrix $U$ such that $S\leq UTU^*$ if and only if
$\lambda_j(S)\leq\lambda_j(T)$ for every $j$.
We write $I$ for the identity matrix, with a subscript when
its size needs to be specified.

For contractions $A_1,\ldots,A_k\in\Mn$, Bourin and Lee
proved that
\[
 \left|\sum_{j=1}^k A_j\right|
 \leq \frac{k}{4}I+\sum_{j=1}^k|A_j|.
\]
They showed that $k/4$ cannot be replaced by a smaller constant
when $k$ is even and conjectured the same conclusion for
odd $k\geq3$; see~\cite[Corollary~4.4 and Remark~4.5]{BourinLee2024}.
We prove their conjecture by constructing, for each odd
$k\geq3$, a family of $k$ Hermitian matrices in $\mathbb M_3$
with eigenvalues $-1,0,1$.

Bourin and Lee also asked about the eigenvalues of sums of
moduli when the sum of the matrices satisfies
$\left|\sum_{j=1}^mX_j\right|\geq I$;
see~\cite[Question~6.3]{BourinLee2026}.
For $X_1,\ldots,X_m\in\mathbb M_d$, with $m,d\geq2$ and
$d\in\{2n-1,2n\}$, we prove that
\[
 \left|\sum_{j=1}^mX_j\right|\geq I_d
 \quad\Longrightarrow\quad
 \lambda_n\left(\sum_{j=1}^m\smod{X_j}\right)\geq\frac12.
\]
For every $m\geq2$, the constant $1/2$ cannot be increased
when $(d,n)=(3,2)$ or $(d,n)=(6,3)$.
For ordinary moduli, we give, for every $0<\varepsilon<1$,
three matrices $X_1,X_2,X_3\in\mathbb M_3$ such that
\[
 X_1+X_2+X_3=I_3,
 \qquad
 \lambda_2\bigl(|X_1|+|X_2|+|X_3|\bigr)=\varepsilon.
\]

Audenaert and Kittaneh~\cite[Problem~5]{AK}
(Problem~6 in arXiv:1201.5232v3) asked whether
the Aujla--Bourin subadditivity inequality~\cite{AB} can be
strengthened to a double inequality. We prove that, for
$A,B\in\Mnp$ and a nonnegative concave function $f$ on
$[0,\infty)$, there exist unitaries $U,V\in\Mn$ such that
\[
 0\leq f(A+B)-Uf(A)U^*\leq Vf(B)V^*.
\]
The proof uses Fulton's theorem on majorized spectra and
a refinement of a Littlewood--Richardson lemma due to Buch,
recorded in~\cite[Lemma~3 and footnote~2]{Fulton}.

For a positive block matrix
\[
 \mathcal A=
 \begin{bmatrix}A&N\\N^*&B\end{bmatrix},
 \qquad A,B,N\in\Mn,
\]
Bourin and Lee~\cite[Corollary~2.4]{BourinLee2022} proved that
\[
 \lambda_1(\mathcal A)\leq\lambda_1(A+B)+r
\]
whenever $N$ is normal and its spectrum is contained in a disk
of radius $r$. We show that the coefficient of $r$ cannot be
decreased, even when the blocks have size $3$.
This answers~\cite[Question~2.5]{BourinLee2022}.

Bourin and Lee proved that the eigenvalues of
\[
 (A^pZ^*B^pZA^p)^{1/p}
\]
converge as $p\to\infty$ for $A,B\in\Mnp$ and $Z\in\Mn$.
They asked whether the matrices themselves converge;
see~\cite[Corollary~2.6 and the paragraph following it]{BourinLee2016}.

Using a unitary dilation technique, we significantly extend the
reciprocal Lie--Trotter theorem of Audenaert and Hiai~\cite{AudenaertHiai},
thereby obtaining convergence in norm.

The paper is organised as follows.
Section~\ref{sec:contractions} gives the examples for sums
of contractions and for ordinary moduli.
Section~\ref{sec:expansive} proves the lower bound for sums
of symmetric moduli.
Section~\ref{sec:double} contains the double inequality for
concave functions and its proof.
Section~\ref{sec:block} gives the examples for the
disk-radius coefficient.
Section~\ref{sec:limit} proves the convergence result.

\section{Sums of matrices and the right modulus}
\label{sec:contractions}

For  matrices $A_1,\ldots,A_k\in\Mn$, a sharp comparison  between the sum of the moduli and the modulus of the sum    states that
\begin{equation}\label{frobenius}
\| \, |A_1+\cdots + A_k|\, \|_2 \le \sqrt{\frac{1+\sqrt{k}}{2}} \, \left\| |A_1|+\cdots +|A_k|\right\|_2.
\end{equation}
This Frobenius norm inequality was proved in \cite{TaZ26} by Q.\,Tang and S.\,Zhang.  The case $k=2$ is due to J.\,Lin and Y.\,Zhang \cite{LZ22} and solves Lee's conjecture \cite{Lee10}.

If we want a comparison in the Loewner order instead of a norm comparison, it is then necessary to deal with bounded families of matrices, for instance with families of contractions (matrices with operator norm $\le1$).  For   contractions $A_1,\ldots,A_k\in\Mn$, Bourin and Lee proved that
\begin{equation}\label{eq:BL-contractions}
 \left|\sum_{j=1}^k A_j\right|
 \leq\frac{k}{4}I+\sum_{j=1}^k|A_j|.
\end{equation}
They also proved that $k/4$ is optimal for even $k$~\cite[Corollary~4.4]{BourinLee2024}. Their Remark~4.5 asks about odd $k$.

For even $k$, the case $k=2$ suffices. Bourin and
Lee~\cite[Corollary~4.4]{BourinLee2024} gave two contractions
$C_1,C_2\in\mathbb M_3$ for which the coefficient $1/2$ in
\[
 |C_1+C_2|\leq\frac12 I+|C_1|+|C_2|
\]
cannot be decreased. For $k=2n$, taking $n$ copies of $C_1$
and $n$ copies of $C_2$ shows that the coefficient $k/4$
in~\eqref{eq:BL-contractions} cannot be decreased either.
For odd $k\geq3$, we construct $k$ partial symmetries in
$\mathbb M_3$. We recall the definition.

\begin{definition}
A partial symmetry is a Hermitian matrix whose spectrum
is contained in $\{-1,0,1\}$.
\end{definition}

\begin{theorem}\label{thm:k-over-four}
For every integer $k>1$, the constant $k/4$ in~\eqref{eq:BL-contractions} is optimal. Partial symmetries in $\mathbb M_3$ suffice to attain it.
\end{theorem}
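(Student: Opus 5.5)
The plan is to reduce optimality to an eigenvalue computation. For Hermitian $A_1,\dots,A_k$ put $S=\sum_j A_j$ and $T=\sum_j|A_j|$. The best constant for this family in~\eqref{eq:BL-contractions} is $c(A_1,\dots,A_k)=\lambda_1(|S|-T)$. So it suffices to exhibit, for each $k$ and each $\delta>0$, partial symmetries in $\mathbb M_3$ with $\lambda_1(|S|-T)\ge k/4-\delta$; a limiting family is enough.

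For even $k$ the argument recalled before the statement applies. Take $k/2$ copies of each of the Bourin--Lee matrices $C_1,C_2$, after checking that these are, or can be replaced by, partial symmetries in $\mathbb M_3$. Then $|S|-T=\tfrac k2\bigl(|C_1+C_2|-|C_1|-|C_2|\bigr)$, whose top eigenvalue is $\tfrac k2\cdot\tfrac12=\tfrac k4$. The real work is odd $k=2m+1\ge3$.

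For odd $k$ I would choose the partial symmetries of the form $A_j=u_ju_j^*-v_jv_j^*$ with $u_j\perp v_j$ unit vectors in $\mathbb C^3$, or with $v_j=0$ (a rank-one projection). Then $|A_j|=u_ju_j^*+v_jv_j^*$, so $T$ is an explicit sum of rank-one projections. The difficulty is that $|S|$ is nonlinear in the data. To handle it I would make the family symmetric: for instance take $m$ copies of one matrix, $m$ copies of a second, and one distinguished matrix $D$, all invariant under a fixed reflection of $\mathbb C^3$. Then $S$ and $T$ commute with that reflection and split into a $2\times2$ block plus a $1\times1$ block, and on the $2\times2$ block $|S|$ is explicit: for $H$ Hermitian $2\times2$ with eigenvalues of opposite signs, $|H|=\sqrt{H^2}$ can be written in closed form via $\operatorname{tr}H$ and $\det H$. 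Note that a common eigenvector $x$ of $S$ cannot be used as test vector: writing $a_j=|\langle u_j,x\rangle|^2$ and $b_j=|\langle v_j,x\rangle|^2$, one gets $\langle(|S|-T)x,x\rangle=\sum_j(a_j-b_j)-\sum_j(a_j+b_j)\le0$. The gain must therefore come from $S$ strongly rotating the test vector, exactly as in the $k=2$ example, where $|C_1+C_2|$ is large in a direction in which $|C_1|+|C_2|$ is small. Concretely I would let $x$ be an eigenvector of $|S|$ for its top eigenvalue, lying in the $2\times2$ block, and tune the angles so that the projections $u_ju_j^*$ are nearly orthogonal to $x$ while $S$ acquires an off-diagonal entry of size about $k/2$ coupling $x$ to the complementary direction.

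The main obstacle is this tuning for odd $k$. The asymmetric extra matrix $D$ must contribute exactly its share $1/4$ rather than destroying the $m/2$ coming from the $2m$ paired matrices, and $|S|$ is not additive. I would introduce one small angle parameter $\varepsilon$ and compute $\lambda_1(|S|-T)$ on the $2\times2$ block as a function of $\varepsilon$ and $m$ via the closed form above. I would then optimize, expecting the supremum $k/4$ to be approached only as $\varepsilon\to0$. This limiting behaviour would confirm that no smaller constant works while staying consistent with~\eqref{eq:BL-contractions}. If the three-group symmetric ansatz fails to reach $k/4$, the fallback is $k$ rank-one projections and anti-projections whose vectors are equally spaced on a great circle in $\mathbb R^3\subset\mathbb C^3$. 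That choice makes $S$ and $T$ circulant-like, and the spectra could then be computed through trigonometric sums.
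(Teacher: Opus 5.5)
Your proposal has a genuine gap. For odd $k$, the only case not already covered by Bourin--Lee, you never exhibit matrices, and the concrete routes you sketch cannot succeed. Your reduction to $\lambda_1(|S|-T)$ and your remark about common eigenvectors are fine. The missing guide is the equality case of~\eqref{eq:BL-contractions}. For a unit vector $x$, put $t_j=\langle|A_j|x,x\rangle$. Then $\langle|S|x,x\rangle\leq\norm{Sx}\leq\sum_j\norm{A_jx}\leq\sum_j\sqrt{t_j}$, so $\langle(|S|-T)x,x\rangle\leq\sum_j(\sqrt{t_j}-t_j)\leq k/4$. Equality forces three things: $t_j=1/4$ for every $j$; $A_jx=\frac12y$ for one common unit vector $y$; and $|S|x=\frac k2x$.

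Your heuristic of making the $u_j$ nearly orthogonal to $x$ points the wrong way. If $u_j\perp x$, then $A_jx=\frac12y$ forces $v_j\parallel y$, hence $A_jy=-y$ and $S^2x=\frac k2Sy=-\frac{k^2}2y$, which cannot equal $\frac{k^2}4x$. More seriously, suppose each of your matrices commutes with a fixed reflection of $\mathbb C^3$. Then each $A_j$ is a partial symmetry of $\mathbb M_2$ plus a scalar, and $|S|-T$ splits the same way with a $1\times1$ part $\leq0$. In $\mathbb M_2$ the equality conditions force each $A_j$ to be rank one with $A_jx=\frac12y$, i.e.\ $A_j=\epsilon yy^*$ with a common sign $\epsilon$. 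Then $|S|=T$, so equality is impossible. By compactness of the set of $k$-tuples of partial symmetries, the supremum of $\lambda_1(|S|-T)$ in that class is strictly below $k/4$. Your fallback fails for the same reason: vectors on one great circle span a $2$-plane. The same compactness shows that in $\mathbb M_3$, approaching $k/4$ is the same as attaining it. So there is no genuine $\varepsilon\to0$ phenomenon to look for.

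The missing idea is to build the configuration directly from the equality conditions, which is what the paper does. Put $\zeta=e^{2\pi i/k}$, $v_j=\frac12e_1+\frac{\sqrt3}2\zeta^je_2$, and $A_j=e_0v_j^*+v_je_0^*$. Each $A_j$ is a partial symmetry with $|A_j|=A_j^2=e_0e_0^*+v_jv_j^*$. Since $\sum_j\zeta^j=0$, one gets $S=\frac k2(e_0e_1^*+e_1e_0^*)$, $|S|=\diag(\frac k2,\frac k2,0)$ and $T=\diag(k,\frac k4,\frac{3k}4)$. Hence $\lambda_1(|S|-T)=k/4$ exactly, at $x=e_1$ with $y=e_0$. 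Here $t_j=|\langle v_j,e_1\rangle|^2=1/4$ and $A_je_1=\frac12e_0$, as required. The $e_2$-components of the $v_j$ cancel through the roots of unity, which is where complex entries are needed for odd $k$ in dimension $3$. Note that $S$ and $T$ split along $\operatorname{span}\{e_0,e_1\}\oplus\mathbb Ce_2$, but no individual $A_j$ does. That is exactly why requiring each matrix to be reflection-invariant is too restrictive. The construction also works for every even $k\geq2$, so you need not check whether the Bourin--Lee matrices $C_1,C_2$ are partial symmetries.
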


\begin{corollary}\label{corthm2.2}
For every integer $k>1$, the constant $k/4$ in~\eqref{eq:BL-contractions} is optimal for real contactions. Partial symmetries in $\mathbb M_6(\bR)$ suffice to attain it.
\end{corollary}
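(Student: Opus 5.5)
Corollary~\ref{corthm2.2} will follow from Theorem~\ref{thm:k-over-four} by the standard realification $\mathbb M_3\to\mathbb M_6(\bR)$. Write $X\in\mathbb M_3$ as $X=P+iQ$ with $P,Q$ real, and set
\[
 \rho(X)=\begin{bmatrix}P&-Q\\ Q&P\end{bmatrix}\in\mathbb M_6(\bR).
\]
The plan is to transport the family of partial symmetries from Theorem~\ref{thm:k-over-four} through $\rho$ and check that every relevant quantity is preserved.

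First, $\rho$ is a real-linear, injective $*$-homomorphism: $\rho(XY)=\rho(X)\rho(Y)$ and $\rho(X^*)=\rho(X)^T$. Moreover $\rho(X)$ is unitarily similar over $\mathbb C$ to $X\oplus\overline{X}$. Consequently:
\begin{itemize}
 \item if $X$ is Hermitian, then $\rho(X)$ is real symmetric with the same spectrum, each eigenvalue doubled in multiplicity, so partial symmetries go to real partial symmetries;
 \item $\rho$ commutes with continuous functional calculus on Hermitian matrices, hence $\rho(|X|)=|\rho(X)|$;
 \item $\rho$ preserves positivity and the operator norm, so real contractions are obtained.
\end{itemize}

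Next, suppose $c<k/4$. Theorem~\ref{thm:k-over-four} supplies partial symmetries $A_1,\dots,A_k\in\mathbb M_3$ such that
\[
 T:=cI+\sum_j|A_j|-\Bigl|\sum_jA_j\Bigr|
\]
is not positive semidefinite. By linearity and the properties above,
\[
 \rho(T)=cI_6+\sum_j|\rho(A_j)|-\Bigl|\sum_j\rho(A_j)\Bigr|.
\]
Since $\rho(T)$ is unitarily similar to $T\oplus\overline T$, its spectrum contains the negative eigenvalue of $T$. So the inequality with constant $c$ fails for the real partial symmetries $\rho(A_j)\in\mathbb M_6(\bR)$. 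Combined with \eqref{eq:BL-contractions}, this shows that $k/4$ is optimal for real contractions.

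The only points needing care are the identity $\rho(|X|)=|\rho(X)|$ and the spectral statement. Both follow from the unitary equivalence $W\rho(X)W^*=X\oplus\overline X$, where
\[
 W=\frac1{\sqrt2}\begin{bmatrix}I&iI\\ I&-iI\end{bmatrix}.
\]
This is a direct check, so I expect no real obstacle.
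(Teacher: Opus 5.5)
Your proof is correct and matches the paper's intended route. The paper gives the corollary no separate proof, and the dimension $6=2\cdot 3$ comes from transporting the complex partial symmetries of Theorem~\ref{thm:k-over-four} through the realification $\rho$, which is unitarily equivalent to $X\oplus\overline{X}$ and so preserves moduli, spectra and the failure of the inequality for every $c<k/4$; the paper later lowers the dimension to $\mathbb M_4(\bR)$ by a direct real construction in Proposition~\ref{cor:k-over-four}.
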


A natural question: can dimension $6$ in Corollary~\ref{corthm2.2} be reduced?
Proposition~\ref{cor:k-over-four} below shows that
dimension $4$ suffices. We first prove
Theorem~\ref{thm:k-over-four}.

\begin{proof}[Proof of Theorem \ref{thm:k-over-four}]
The even case is contained in~\cite[Corollary~4.4]{BourinLee2024}. Let $k\geq3$ be odd, let $e_0,e_1,e_2$ be the standard basis of $\mathbb C^3$, and put $\zeta=e^{2\pi i/k}$. For $j=0,\ldots,k-1$, set
\[
 v_j=\frac12e_1+\frac{\sqrt3}{2}\zeta^j e_2,
 \qquad
 A_j=\begin{bmatrix}0&v_j^*\\v_j&0_2\end{bmatrix}
\]
relative to $\mathbb C e_0\oplus\operatorname{span}\{e_1,e_2\}$. Each $A_j$ has eigenvalues $1,0,-1$ and
\[
 |A_j|=A_j^2=e_0e_0^*+v_jv_j^*.
\]
Since $\sum_{j=0}^{k-1}\zeta^j=0$, we have
\[
 \left|\sum_{j=0}^{k-1}A_j\right|
 =\diag\left(\frac k2,\frac k2,0\right),
 \qquad
 \sum_{j=0}^{k-1}|A_j|
 =\diag\left(k,\frac k4,\frac{3k}{4}\right).
\]
The largest eigenvalue of their difference is $k/4$. Thus, no smaller scalar can replace $k/4$ in~\eqref{eq:BL-contractions}.
\end{proof}

\begingroup

\begin{proposition}\label{cor:k-over-four}
For every integer $k\geq2$, the constant $k/4$
in~\eqref{eq:BL-contractions} cannot be decreased, even when
the matrices are real partial symmetries in
$\mathbb M_4(\mathbb R)$.
\end{proposition}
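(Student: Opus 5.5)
The plan is to adapt the construction in the proof of Theorem~\ref{thm:k-over-four}. There, the complex phase $\zeta^j$ multiplies $e_2$. Here I will replace it by a real rotation in a two-dimensional plane, which uses one extra real dimension ($3$ complex becomes $4$ real). This gives a single construction valid for all $k\geq 2$, even or odd.

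\textbf{Construction.} Let $e_0,e_1,e_2,e_3$ be the standard basis of $\bR^4$ and put $\theta_j=2\pi j/k$ for $j=0,\ldots,k-1$. Set
\[
 v_j=\frac12e_1+\frac{\sqrt3}{2}\bigl(\cos\theta_j\, e_2+\sin\theta_j\, e_3\bigr),
 \qquad
 A_j=\begin{bmatrix}0&v_j^{T}\\ v_j&0_3\end{bmatrix}
\]
relative to $\bR e_0\oplus\operatorname{span}\{e_1,e_2,e_3\}$. Each $v_j$ is a unit vector, so $A_j$ is real symmetric with eigenvalues $1,0,0,-1$, hence a real partial symmetry. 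As before,
\[
 |A_j|=A_j^2=e_0e_0^T+v_jv_j^T .
\]

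\textbf{Computing the two sides.} For $k\geq2$ we have $\sum_j\cos\theta_j=\sum_j\sin\theta_j=0$, so $\sum_jv_j=\frac k2e_1$. Hence $\sum_jA_j$ is $\frac k2$ times the symmetric matrix coupling $e_0$ and $e_1$, and
\[
 \Bigl|\sum_jA_j\Bigr|=\diag\Bigl(\frac k2,\frac k2,0,0\Bigr).
\]
For the sum of moduli, expand
\[
 \sum_jv_jv_j^T=\frac k4e_1e_1^T+(\text{cross terms})+\frac34\sum_j w_jw_j^T,
 \qquad w_j=\cos\theta_j\,e_2+\sin\theta_j\,e_3 .
\]
The cross terms between $e_1$ and $e_2,e_3$ carry the factors $\sum_j\cos\theta_j$ and $\sum_j\sin\theta_j$, so they vanish. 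The only point needing care is the last term $\sum_j w_jw_j^T$.
\begin{itemize}
\item For $k\geq3$, $\sum_j\cos2\theta_j=\sum_j\sin2\theta_j=0$, so $\sum_jw_jw_j^T=\frac k2(e_2e_2^T+e_3e_3^T)$.
\item For $k=2$ it equals $2e_2e_2^T$ instead.
\end{itemize}
In either case $\sum_j|A_j|$ is diagonal with $(0,0)$ entry $k$ and $(1,1)$ entry $k/4$. The $e_2,e_3$ entries are irrelevant, because there $|\sum_jA_j|$ vanishes.

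\textbf{Conclusion.} The $(1,1)$ entry of $|\sum_jA_j|-\sum_j|A_j|$ is $\frac k2-\frac k4=\frac k4$. Suppose $|\sum_jA_j|\leq cI+\sum_j|A_j|$ held with some $c$. Compressing to $e_1$ gives $\frac k2\leq c+\frac k4$, so $c\geq k/4$, which is the claim.

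I expect no real obstacle. The only care needed is the separate check of the second-moment sum $\sum_jw_jw_j^T$ for $k=2$, where $\sum_j\cos2\theta_j\neq0$. This check is harmless because only the $e_1$ direction matters.
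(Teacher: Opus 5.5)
Your proposal is correct and is essentially the paper's proof. It uses the same real vectors $v_j$ built from $\cos\theta_j,\sin\theta_j$, the same identity $\bigl|\sum_j A_j\bigr|=\diag(k/2,k/2,0,0)$, and the same compression to $e_1$ (the paper's $x=(0,1,0,0)^{\mathsf T}$), which gives $k/2\le c+k/4$. The only difference is that you compute the full diagonal of $\sum_j|A_j|$, including the separate $k=2$ check of $\sum_j w_jw_j^{T}$. The paper evaluates only the quadratic form at $e_1$, so that second-moment computation is not needed.
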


\begin{proof}
Fix $k\geq2$. For $j=0,\ldots,k-1$, put
\[
 \theta_j=\frac{2\pi j}{k},
 \qquad
 v_j=
 \begin{pmatrix}
  1/2\\
  (\sqrt3/2)\cos\theta_j\\
  (\sqrt3/2)\sin\theta_j
 \end{pmatrix},
 \qquad
 A_j=
 \begin{pmatrix}
  0&v_j^{\mathsf T}\\
  v_j&0_3
 \end{pmatrix}.
\]
Each $v_j$ is a unit vector. Hence $A_j$ is a real
partial symmetry and
\[
 |A_j|=A_j^2=
 \begin{pmatrix}
  1&0\\
  0&v_jv_j^{\mathsf T}
 \end{pmatrix}.
\]
Since
\[
 \sum_{j=0}^{k-1}\cos\theta_j
 =
 \sum_{j=0}^{k-1}\sin\theta_j
 =0,
\]
we have
\[
 \left|\sum_{j=0}^{k-1}A_j\right|
 =
 \diag\left(\frac{k}{2},\frac{k}{2},0,0\right).
\]
For the unit vector $x=(0,1,0,0)^{\mathsf T}$,
\[
 \left\langle
 \left|\sum_{j=0}^{k-1}A_j\right|x,x
 \right\rangle=\frac{k}{2},
 \qquad
 \left\langle
 \left(\sum_{j=0}^{k-1}|A_j|\right)x,x
 \right\rangle=\frac{k}{4}.
\]
Thus, if
\[
 \left|\sum_{j=0}^{k-1}A_j\right|
 \leq cI_4+\sum_{j=0}^{k-1}|A_j|,
\]
then $k/2\leq c+k/4$, so $c\geq k/4$.
\end{proof}
\endgroup

\begin{remark}\label{rem:colbrook}
After completing this manuscript, we learned of the resolution
of Problem MI-03 by Colbrook~\cite{ColbrookMI03}, dated
September 11, 2026. Colbrook proves that the constant $k/4$
in~\eqref{eq:BL-contractions} is optimal for every $k\geq2$;
in particular, this value is already attained by complex
rank-one contractions in $\mathbb M_2$.
Note that Colbrook's example does not cover
Proposition~\ref{cor:k-over-four}.
\end{remark}

The following proposition is related to a question of Bourin and Lee that we will  investigate in the next section.

\begin{proposition}\label{prop:ordinary-zero}
For every $0<\varepsilon<1$, there exist $X_1,X_2,X_3\in\mathbb M_3$ such that
\[
 X_1+X_2+X_3=I
\]
and
\begin{equation*}
 \lambda_2\left(|X_1|+|X_2|+|X_3|\right)=\varepsilon.
\end{equation*}
\end{proposition}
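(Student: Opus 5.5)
The plan is to build an explicit one-parameter family $X_1(t),X_2(t),X_3(t)\in\mathbb M_3$ with $X_1+X_2+X_3=I$ for every $t$. Along this family, $t\mapsto\lambda_2\bigl(|X_1|+|X_2|+|X_3|\bigr)$ should be continuous, should tend to $0$ at one end, and should be $\geq 1$ somewhere (for instance at $X_1=X_2=X_3=\frac13 I$). The intermediate value theorem then gives every $\varepsilon\in(0,1)$. Continuity is automatic, since $X\mapsto|X|$ and $\lambda_2$ are continuous. So all the work is in the end of the family where $\lambda_2\to0$.

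First I rule out rank-one summands. If $X_j=u_jw_j^*$ with $\|w_j\|=1$, then $|X_j|=\|u_j\|\,w_jw_j^*$. The condition $\sum_j u_jw_j^*=I$ says that $(u_j)$ is the dual basis of $(w_j)$, so $\langle u_j,w_j\rangle=1$ and each weight satisfies $\|u_j\|\geq1$. Suppose $\lambda_2\bigl(\sum_j\|u_j\|w_jw_j^*\bigr)$ were small. Then the sum would be close to rank one along some unit vector $x$. Every term carries weight at least $1$, so every $w_j$ would have to be nearly parallel to $x$, contradicting the fact that $w_1,w_2,w_3$ span $\mathbb C^3$. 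Hence at least one $X_j$ must have rank $\geq2$, and the construction has to exploit cancellation in the sum rather than in the moduli.

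The guiding fact is that $|X|$ is supported on the row space of $X$, that is, on the range of $X^*$. To make $\sum_j|X_j|$ nearly rank one, I want every $X_j$ to have its row space concentrated near a common direction, say $e_1$, except for contributions of small size. Since $X\mapsto X^*X$ depends on the rows, I will put large entries $\pm R$ in the first column of two of the matrices, so that they cancel in the sum. Then $X_j^*X_j$ has a dominant entry $\sim R^2$ at position $(1,1)$, plus bounded mixed terms. The moduli behave like $R\,e_1e_1^*+O(1)$, and the $O(1)$ part on $e_1^\perp$ is what must be made small.

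Concretely, I will try
\[
 X_2=\begin{bmatrix}0&0&0\\ R&a&0\\ 0&0&b\end{bmatrix},
 \qquad
 X_3=\begin{bmatrix}0&0&0\\ -R&c&0\\ 0&0&d\end{bmatrix},
 \qquad
 X_1=I-X_2-X_3,
\]
with $a+c$ and $b+d$ tuned, and I will compute the $2\times2$ compression of $\sum_j|X_j|$ to $e_1^\perp$ up to $O(1/R)$. The hope is a Schur-complement effect: after removing the huge $e_1$ direction, the residual on $e_1^\perp$ is of order the off-diagonal data divided by $R$. This residual should go to $0$ as $R\to\infty$ while the diagonal data stays small. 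That gives $\lambda_2\to0$, and the intermediate value theorem does the rest.

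This asymptotic step is the main obstacle. It is not clear in advance that the $O(1)$ contributions of $|X_2|$ and $|X_3|$ on $e_1^\perp$ can be made to vanish while still summing to the identity on $e_1^\perp$. If this block ansatz fails, the first fallback is a $2\times2$ block in the $(e_2,e_3)$ plane whose large entries cancel in the same way. A further alternative is to let one summand be nearly a large multiple of a partial isometry mapping $e_1^\perp$ into $\mathbb C e_1$, and then compute the moduli exactly.
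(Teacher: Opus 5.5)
\textbf{There is a genuine gap.} Your intermediate-value framework is sound. The set $\{X_1+X_2+X_3=I\}$ is affine, so any configuration can be joined by a segment to $(\tfrac13I,\tfrac13I,\tfrac13I)$, where $\lambda_2=1$. The problem is the one substantive step: producing configurations with $\lambda_2$ arbitrarily small. You never carry it out, and the ansatz you propose cannot carry it out.

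All three of your matrices are block diagonal with respect to $\operatorname{span}\{e_1,e_2\}\oplus\mathbb Ce_3$, and $X_1=\diag(1,1-a-c,1-b-d)$.
\begin{itemize}
\item On $\mathbb Ce_3$ the three matrices are the scalars $1-b-d$, $b$, $d$. So $\sum_j|X_j|$ has the eigenvalue $|1-b-d|+|b|+|d|\geq1$ there.
\item On $\operatorname{span}\{e_1,e_2\}$ the $(1,1)$ entry of $\sum_j|X_j|$ is at least $1$, so this block also has an eigenvalue $\geq1$.
\end{itemize}
Hence $\lambda_2\bigl(\sum_j|X_j|\bigr)\geq1$ for every choice of $R,a,b,c,d$. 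Your Schur-complement mechanism does make the $e_2$-eigenvalue small, of order $1/R$ when $a+c=1$. But it acts only on the single direction that the entries $\pm R$ couple to $e_1$; the other direction of $e_1^\perp$ is untouched. The fallbacks are only sketched, so the proposal contains no configuration with $\lambda_2<1$.

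\textbf{Your preliminary no-go for rank-one summands is false.} It discarded exactly the construction that works, which is the one the paper uses. That the $w_j$ are nearly parallel to $x$ does not contradict their spanning $\mathbb C^3$. Nearly parallel $w_j$ only force the dual vectors $u_j$ to be long, and the transverse contributions $\|u_j\|\,\|P_{x^\perp}w_j\|^2$ can still tend to $0$.

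Concretely, take the following.
\begin{itemize}
\item Let $f_0,f_1,f_2$ be an orthonormal basis of $\mathbb C^3$ all of whose coordinates have modulus $1/\sqrt3$ (e.g.\ the Fourier basis).
\item Let $Q>0$ be diagonal, and set $X_{j+1}=c_jd_j^*$ with $c_j=Q^{-1/2}f_j$ and $d_j=Q^{1/2}f_j$.
\end{itemize}
Then $\sum_jX_j=Q^{-1/2}\bigl(\sum_jf_jf_j^*\bigr)Q^{1/2}=I$. Since $\|c_j\|^2=\Tr Q^{-1}/3$ and $\|d_j\|^2=\Tr Q/3$, the formula $|cd^*|=\|c\|\,\|d\|^{-1}dd^*$ gives
\[
 \sum_j|X_j|=\kappa Q,
 \qquad
 \kappa=\bigl(\Tr Q^{-1}/\Tr Q\bigr)^{1/2}.
\]
For $Q=\diag(B,1,1)$ with $B\geq1$ this gives $\lambda_2=\kappa$. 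As $B$ runs over $[1,\infty)$, $\kappa$ decreases continuously from $1$ to $0$, so your intermediate-value argument closes at once.

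The paper avoids the intermediate-value step altogether. It takes $Q=\diag(b,\varepsilon,\varepsilon)$ with $b$ chosen so that $\Tr Q=\Tr Q^{-1}$. Then $\kappa=1$ and $\sum_j|X_j|=Q$, whose second eigenvalue is exactly $\varepsilon$.
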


\begin{proof}
Put
\[
 a=\varepsilon^{-1}-\varepsilon,
 \qquad
 b=a+\sqrt{a^2+1},
 \qquad
 Q=\diag(b,\varepsilon,\varepsilon).
\]
The identity
\[
 b-b^{-1}=2a=2(\varepsilon^{-1}-\varepsilon)
\]
shows that $b+2\varepsilon=b^{-1}+2\varepsilon^{-1}$ so that
 $$\Tr Q=\Tr Q^{-1}.$$
Now, let $\omega=e^{2\pi i/3}$, and take the orthonormal basis
\[
 f_j=\frac1{\sqrt3}
       \begin{pmatrix}1\\\omega^j\\\omega^{2j}\end{pmatrix},
 \qquad j=0,1,2.
\]
Set
\[
 c_j=Q^{-1/2}f_j,
 \qquad d_j=Q^{1/2}f_j,
 \qquad X_{j+1}=c_jd_j^*.
\]
Each coordinate of $f_j$ has squared modulus $1/3$, so
\[
 \norm{c_j}^2=\frac{\Tr Q^{-1}}3
             =\frac{\Tr Q}3=\norm{d_j}^2.
\]
Hence $|X_{j+1}|=d_jd_j^*$. Summing over the orthonormal basis gives
\[
 \sum_{j=1}^3X_j
 =Q^{-1/2}\left(\sum_{j=0}^2f_jf_j^*\right)Q^{1/2}=I
\]
and
\[
 \sum_{j=1}^3|X_j|
 =Q^{1/2}\left(\sum_{j=0}^2f_jf_j^*\right)Q^{1/2}=Q.
\]
Since $b>1>\varepsilon$, the second largest eigenvalue of $Q$ is $\varepsilon$.
\end{proof}

\begin{remark} An example of two real matrices $X_1,X_2\in\mathbb M_3$ such that $X_1+X_2=I$ and
$$
\lambda_2\left(|X_1|+|X_2|\right)\simeq 0,8835
$$
is given by Zhang in \cite{TZhang2026}. Although $X_1$ and $X_2$ commute, their moduli need not commute.
\end{remark}

\begin{remark} Li~\cite{Li26} proved the following extension of~\eqref{frobenius}
for every nonnegative concave function $f$ on $[0,\infty)$:
\begin{equation}\label{frobeniusconcave}
\left\| f\left( |A_1+\cdots + A_k|\right)\right\|_2 \le \sqrt{\frac{1+\sqrt{k}}{2}} \, \left\| f(|A_1|)+\cdots +f(|A_k|)\right\|_2.
\end{equation}
In section 4, we will study some other nice matrix inequalities for concave functions.
\end{remark}

\section{Expansive decompositions and symmetric modulus}
\label{sec:expansive}

Bourin and Lee~\cite[Question~6.3]{BourinLee2026} asked for
lower bounds on
\[
 \lambda_n\left(\sum_{j=1}^m\smod{X_j}\right)
 \quad\text{and}\quad
 \lambda_n\left(\sum_{j=1}^m|X_j|\right)
\]
for matrices $X_1,\ldots,X_m\in\mathbb M_d$ satisfying
\[
 \left|\sum_{j=1}^mX_j\right|\geq I_d,
 \qquad d\in\{2n-1,2n\}.
\]

Proposition~\ref{prop:ordinary-zero} shows that
$\lambda_2(|X_1|+|X_2|+|X_3|)$ can be arbitrarily small
for three matrices in $\mathbb M_3$, even when
$X_1+X_2+X_3=I_3$.
For symmetric moduli, the following theorem gives
\[
 \lambda_n\left(\sum_{j=1}^m\smod{X_j}\right)\geq\frac12
\]
whenever $m,d\geq2$,
$d\in\{2n-1,2n\}$, and
$\left|\sum_{j=1}^mX_j\right|\geq I_d$.

\begin{theorem}\label{thm:sym-lower}
Let $X_1,\ldots,X_m\in\mathbb M_d$, where $m,d\geq2$
and $d\in\{2n-1,2n\}$. If
$\left|\sum_{j=1}^mX_j\right|\geq I_d$, then
\begin{equation}\label{eq:sym-lower}
 \lambda_n\left(\sum_{j=1}^m\smod{X_j}\right)\geq\frac12.
\end{equation}
For   any $m\ge2$, the constant $1/2$ is optimal, both for $(d,n)=(3,2)$ and  for $(d,n)=(6,3)$.
\end{theorem}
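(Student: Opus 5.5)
The plan is a pointwise Cauchy--Schwarz estimate followed by a dimension count. Put $S=\sum_{j=1}^m X_j$. Since $|S|\geq I_d$, $S$ is invertible, so its polar decomposition $S=W|S|$ has $W$ unitary. Then
\[
|S|=W^*S=\sum_j W^*X_j .
\]
For a unit vector $x$, taking $\langle\cdot\, x,x\rangle$ of this identity gives a real number, and therefore
\[
1\leq\langle |S|x,x\rangle\leq\sum_j|\langle X_jx,Wx\rangle| .
\]
For each $j$ we use the mixed Schwarz inequality $|\langle X_jx,y\rangle|^2\leq\langle|X_j|x,x\rangle\,\langle|X_j^*|y,y\rangle$, followed by AM--GM. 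Setting $A=\sum_j|X_j|$ and $B=\sum_j|X_j^*|$, this yields
\[
2\leq\langle Ax,x\rangle+\langle BWx,Wx\rangle\qquad\text{for every unit vector }x. \tag{$*$}
\]

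Next comes the dimension count. Let $M=\sum_j\smod{X_j}=(A+B)/2$ and suppose $\lambda_n(M)<1/2$. By the min--max principle there is a subspace $E$ with $\dim E=d-n+1$ and $\langle Mz,z\rangle<\tfrac12\|z\|^2$ for all nonzero $z\in E$. Then
\[
\dim(E\cap W^*E)\geq 2(d-n+1)-d=d-2n+2\geq1,
\]
because $d\leq 2n$; this is exactly where the hypothesis $d\in\{2n-1,2n\}$ enters. Pick a unit vector $x\in E\cap W^*E$, so that both $x$ and $Wx$ lie in $E$. Since $A,B\geq0$, we have $\langle Ax,x\rangle\leq\langle(A+B)x,x\rangle=2\langle Mx,x\rangle<1$, and likewise $\langle BWx,Wx\rangle\leq 2\langle MWx,Wx\rangle<1$. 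Adding these contradicts $(*)$, which proves \eqref{eq:sym-lower}. The argument is short, and I expect no difficulty in this half.

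For optimality, the plan is to first produce an example with $m=2$ and $d=3$, that is, $X_1,X_2\in\mathbb M_3$ with $|X_1+X_2|\geq I_3$ and $\lambda_2(\smod{X_1}+\smod{X_2})=1/2$ exactly. Two reductions then handle the remaining cases.
\begin{itemize}
\item For general $m\geq2$, append $X_3=\cdots=X_m=0$.
\item For $(d,n)=(6,3)$, take the direct sums $X_j\oplus X_j$. The spectrum of the new sum is the old one with doubled multiplicities, so if the $3\times3$ sum has eigenvalues $\alpha\geq\tfrac12\geq\beta$, the doubled sum has $\lambda_3=\tfrac12$. The hypothesis is preserved because $|S\oplus S|=|S|\oplus|S|\geq I_6$.
\end{itemize}

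The main obstacle is constructing the $3\times3$ example. The proof above says where equality must occur. We need a unit vector $x$ such that $x$ and $Wx$ both lie in the low eigenspace of $M$. Equality must also hold in each Schwarz step, and for each $j$ the two terms $\langle|X_j|x,x\rangle$ and $\langle|X_j^*|Wx,Wx\rangle$ must be equal. This suggests building $X_1,X_2$ from rank-one or nilpotent pieces, in the spirit of Proposition~\ref{prop:ordinary-zero}. Concretely, I would first try a cyclic-shift-type unitary $W$ on $\mathbb C^3$ with $X_1+X_2=W$ (so $|S|=I$), with $X_1=uv^*$ rank one and $X_2=W-uv^*$. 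The vectors $u,v$ would be chosen so that $\smod{X_1}+\smod{X_2}$ has $e_1+e_2$ or a similar vector as a $1/2$-eigenvector while its top eigenvalue is large. I would then tune the parameters and verify the spectrum by direct computation.
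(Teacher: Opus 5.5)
Your proof of \eqref{eq:sym-lower} is correct and is essentially the paper's argument written pointwise. The mixed Schwarz inequality plus AM--GM is the quadratic-form version of the block positivity $\begin{bmatrix}|X_j^*|&X_j\\X_j^*&|X_j|\end{bmatrix}\geq0$, your $(*)$ is exactly the paper's $U^*PU+Q\geq2I_d$, and your intersection $E\cap W^*E$ replaces the Weyl step. There is one slip: $d-2n+2\geq1$ holds because $d\geq2n-1$, not because $d\leq2n$.

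The genuine gap is in the optimality half. You give no example, and the example you set out to build cannot exist: there are no $X_1,X_2$ with $|X_1+X_2|\geq I_3$ and $\lambda_2(\smod{X_1}+\smod{X_2})=\tfrac12$ exactly. To see this, run your own argument with $\lambda_n(M)=\tfrac12$ and non-strict inequalities. For a unit vector $x\in E\cap W^*E$ you get $\langle Ax,x\rangle+\langle Bx,x\rangle\leq1$ and $\langle AWx,Wx\rangle+\langle BWx,Wx\rangle\leq1$. Together with $(*)$, this forces $\langle Bx,x\rangle=0$. Hence $|X_j^*|x=0$, so $X_j^*x=0$ for every $j$ and $S^*x=0$, contradicting the invertibility of $S$. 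Your equality analysis missed exactly this condition: equality in $\langle Ax,x\rangle\leq\langle(A+B)x,x\rangle$ requires $Bx=0$. So the inequality is always strict, and optimality can only mean that no larger constant works. Proving that requires a family with $\lambda_2\to\tfrac12$. A compactness argument also shows that any such family must be unbounded.

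This is what the paper does in Proposition~\ref{prop:sym-sharp}, and its example has the shape you guessed: a cyclic shift $U$ split into a rank-one summand plus a remainder. Take $D=\diag(1,\varepsilon,\varepsilon^{-1})$ and the orthonormal basis $u_1=e_1$, $u_{2,3}=(e_2\pm e_3)/\sqrt2$. The rank-one matrices $Y_j=(UD^{1/2}u_j)(D^{-1/2}u_j)^*$ have factors of equal norm, so $|Y_j|$ and $|Y_j^*|$ are explicit. Moreover $\sum_jY_j=U$, $\sum_j|Y_j|=D^{-1}$ and $\sum_j|Y_j^*|=UDU^*$. Set $X_1=Y_1+Y_2$ and $X_2=Y_3$; the summands of $X_1$ have orthogonal initial and final spaces, so their moduli add. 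This gives $\smod{X_1}+\smod{X_2}=\diag\bigl(\tfrac{1+\varepsilon}2,\varepsilon^{-1},\tfrac{1+\varepsilon}2\bigr)$, so $\lambda_2=\tfrac{1+\varepsilon}2\to\tfrac12$ while $\lambda_1=\varepsilon^{-1}\to\infty$. Your zero-padding and doubling reductions then go through unchanged, with $\tfrac{1+\varepsilon}2$ in place of $\tfrac12$.
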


\begin{proof}
Put $X=\sum_{j=1}^mX_j$ and
\[
 P=\sum_{j=1}^m|X_j^*|,
 \qquad
 Q=\sum_{j=1}^m|X_j|.
\]
The standard block positivity relation as in \cite[Sec.\ 3]{BourinLee2026} gives
\[
 \begin{bmatrix}|X_j^*|&X_j\\X_j^*&|X_j|\end{bmatrix}\geq0
\]
and, summing over $j$, 
\[
 \begin{bmatrix}P&X\\X^*&Q\end{bmatrix}\geq0.
\]
Write $X=U|X|$, where $U$ is unitary. Conjugation by $\diag(U^*,I_d)$ gives
\[
 \begin{bmatrix}P_0&|X|\\|X|&Q\end{bmatrix}\geq0,
 \qquad P_0=U^*PU.
\]
Evaluating on $(h,-h)$ yields $P_0+Q\geq2|X|\geq2I_d$.

Weyl's eigenvalue inequality~\cite[Chapter~III]{Bhatia} now gives
\begin{align*}
 2
 &\leq\lambda_d(P_0+Q)\\
 &\leq\lambda_n(P_0)+\lambda_{d-n+1}(Q)\\
 &\leq\lambda_n(P)+\lambda_n(Q)\\
 &\leq2\lambda_n(P+Q).
\end{align*}
The second line uses 
$$
d=1+(d-1), \qquad d-1= (d-n)+(n-1)
$$
where the assumption $d\ge 2$ ensures that both $d-n\ge 0$ and $n-1\ge 0$.
The third line uses $d-n+1\geq n$. Since $\sum_j\smod{X_j}=(P+Q)/2$, this proves~\eqref{eq:sym-lower}. 

 Proposition~\ref{prop:sym-sharp} below proves optimality of the constant $1/2$ for $(d,n)=(3,2)$ and  $m=2$ (and so by   adding some zero summands for any $m\ge 2$). The comment after the proof of Proposition~\ref{prop:sym-sharp} establishes the optimality of the constant $1/2$ for $(d,n)=(6,3)$. 
\end{proof}

\begin{proposition}\label{prop:sym-sharp}
For every $0<\varepsilon<1$, there exist $X_1,X_2\in\mathbb M_3$ such that $X_1+X_2$ is unitary and
\begin{equation}\label{eq:sym-example}
 \lambda_2\bigl(\smod{X_1}+\smod{X_2}\bigr)
 =\frac{1+\varepsilon}{2}.
\end{equation}
\end{proposition}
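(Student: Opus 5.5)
The construction will be guided by the equality cases in the proof of Theorem~\ref{thm:sym-lower}. With $P=\sum_j|X_j^*|$, $Q=\sum_j|X_j|$ and $X=X_1+X_2$ unitary, that proof gives $2\le\lambda_3(P_0+Q)\le\lambda_2(P)+\lambda_2(Q)\le 2\lambda_2(P+Q)$. To push $\lambda_2(P+Q)$ down to about $1$, we need $\lambda_2(P)=\lambda_2(Q)=1$. We also need the second eigenvectors of $P$ and $Q$ to be ``crossed'': the direction where $P$ has eigenvalue $1$ should be one where $Q$ is nearly $0$, and vice versa. A huge common top eigenvalue $L$ absorbs everything else. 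So the target is roughly
\[
P\approx\diag(L,1,\varepsilon),\qquad Q\approx\diag(L,\varepsilon,1),
\]
in a suitable basis, which gives $P+Q\approx\diag(2L,1+\varepsilon,1+\varepsilon)$ and $\lambda_2(\smod{X_1}+\smod{X_2})=(1+\varepsilon)/2$.

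First I would rule out the cheap route. A direct sum of a $2\times2$ example and a scalar block cannot work: in $\mathbb M_2$, for a unitary $U$, $\Tr(P+Q)\ge 2\,\mathrm{Re}\Tr(U^*X)=4$. Hence the top eigenvalue of the $2\times2$ part of $\smod{X_1}+\smod{X_2}$ is at least $1$, and it becomes $\lambda_2$ of the whole matrix. The three coordinates must therefore genuinely mix, as in Proposition~\ref{prop:ordinary-zero}.

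I would use a rank-one ansatz like the one in Proposition~\ref{prop:ordinary-zero}. Put $X_1=c\,d^*$, so that
\[
|X_1|=\frac{\norm c}{\norm d}\,dd^*,\qquad |X_1^*|=\frac{\norm d}{\norm c}\,cc^*,
\]
and set $X_2=U-X_1$ for a unitary $U$ to be chosen, most naturally a permutation or diagonal unitary. I would choose $c,d$ with large norms along a common direction $e_0$, which produces the eigenvalue $L$. I would then choose $U$ so that $X_2$ acts on $\operatorname{span}\{e_1,e_2\}$ like a ``shift'' sending $e_1$ to $e_2$. Such a shift has $|X_2|$ concentrated on $e_1$ and $|X_2^*|$ concentrated on $e_2$, which yields exactly the crossed pattern of $P$ and $Q$ above. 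The parameter controlling the small leakage of $X_2$ onto the wrong direction is then tuned so that the compression of $P+Q$ to $e_0^\perp$ has eigenvalues $1+\varepsilon$ and $1+\varepsilon$ or larger. The leftover entries coupling $e_0$ to $e_0^\perp$ are handled by letting $L$ dominate, or better, by choosing the vectors so that this coupling vanishes exactly and $e_0$ is an exact eigenvector.

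The key steps, in order, are these. (i) Fix an explicit one-parameter family $X_1(\varepsilon)$ and $X_2(\varepsilon)=U-X_1(\varepsilon)$ and check that the sum is unitary. (ii) Compute $|X_2|$ and $|X_2^*|$, which requires $X_2^*X_2$ and $X_2X_2^*$ to be block diagonal with an explicitly computable $2\times2$ block. (iii) Verify that $\smod{X_1}+\smod{X_2}$ splits as $\lambda_1\oplus T$ with $\lambda_1\ge\lambda_2$ and $\lambda_1(T)=(1+\varepsilon)/2$. Since the statement asks for equality and not just an upper bound, step (iii) needs an exact eigenvalue computation, so the family should be chosen to keep $T$ diagonal or circulant. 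The main obstacle is step (ii): the square roots of $2\times2$ positive blocks in $X_2^*X_2$ and $X_2X_2^*$ must come out in closed form. I would first aim to make $X_2$ a weighted partial isometry on $e_0^\perp$ plus a rank-one correction that is orthogonal on both sides, so that both moduli are diagonal.
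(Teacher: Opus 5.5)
Your heuristics are sound and point at the right shape. The paper's example has $\sum_j|X_j^*|=\diag(\varepsilon,\varepsilon^{-1},1)$ and $\sum_j|X_j|=\diag(1,\varepsilon^{-1},\varepsilon)$. That is your crossed pattern with $L=\varepsilon^{-1}$, and one of its two summands is rank one, as in your ansatz.

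The gap is that you never produce $X_1$ and $X_2$. Steps (i)--(iii) are only described. The step you flag as the main obstacle is left unresolved: computing $|X_2|$ and $|X_2^*|$ for $X_2=U-cd^*$ in closed form, so that $\lambda_2$ comes out exactly $(1+\varepsilon)/2$. The proposition is a pure existence statement with an exact equality, so the explicit example and its verification are the entire proof. As written, your proposal is a plan rather than a proof.

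There is also a small slip. The shift $e_1\mapsto e_2$ puts $|X_2|$ on $e_1$ and $|X_2^*|$ on $e_2$, which is the reverse of your target $P\approx\diag(L,1,\varepsilon)$, $Q\approx\diag(L,\varepsilon,1)$. This is harmless after relabelling, but it shows the pattern was not checked.

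The missing idea is to avoid square roots altogether by splitting the unitary itself into rank-one pieces with matched norms. The paper takes:
\begin{itemize}
\item $D=\diag(1,\varepsilon,\varepsilon^{-1})$;
\item the cyclic permutation $U$ with $Ue_1=e_3$, $Ue_2=e_1$, $Ue_3=e_2$;
\item $C=UD^{1/2}$ and $E=D^{-1/2}$;
\item the orthonormal basis $u_1=e_1$, $u_{2,3}=(e_2\pm e_3)/\sqrt2$;
\item $Y_j=(Cu_j)(Eu_j)^*$.
\end{itemize}
Since $\norm{Cu}^2=\langle Du,u\rangle$ and $\norm{Eu}^2=\langle D^{-1}u,u\rangle$, while $D_{11}=1$ and the last two diagonal entries of $D$ are reciprocal, you get $\norm{Cu_j}=\norm{Eu_j}$ for each $j$. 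Hence $|Y_j|=(Eu_j)(Eu_j)^*$ and $|Y_j^*|=(Cu_j)(Cu_j)^*$. Summing over the basis gives $\sum_jY_j=CE^*=U$, $\sum_j|Y_j|=D^{-1}$ and $\sum_j|Y_j^*|=UDU^*$.

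Now set $X_1=Y_1+Y_2$ and $X_2=Y_3$. This works because $Cu_1\perp Cu_2$ and $Eu_1\perp Eu_2$, so the moduli of $Y_1,Y_2$ (and of their adjoints) simply add. The result is $\smod{X_1}+\smod{X_2}=\diag\bigl(\tfrac{1+\varepsilon}{2},\varepsilon^{-1},\tfrac{1+\varepsilon}{2}\bigr)$ exactly.

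Note that $|X_1|$ itself is not diagonal in this example; only the sums of moduli are. So your aim of making each modulus diagonal is more than you need.
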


\begin{proof}
Let
\[
 D=\diag(1,\varepsilon,\varepsilon^{-1}),
 \qquad
 U=\begin{bmatrix}0&1&0\\0&0&1\\1&0&0\end{bmatrix},
 \qquad
 C=UD^{1/2},
 \qquad E=D^{-1/2}.
\]
Choose the orthonormal basis
\[
 u_1=e_1,
 \qquad
 u_2=\frac{e_2+e_3}{\sqrt2},
 \qquad
 u_3=\frac{e_2-e_3}{\sqrt2},
\]
and put $c_j=Cu_j$, $d_j=Eu_j$, and $Y_j=c_jd_j^*$. The norms satisfy
\[
 \norm{c_1}=\norm{d_1}=1,
 \qquad
 \norm{c_j}^2=\norm{d_j}^2
 =\frac{\varepsilon+\varepsilon^{-1}}2
 \quad(j=2,3).
\]
The rank-one modulus formula therefore gives
\[
 |Y_j|=d_jd_j^*,
 \qquad
 |Y_j^*|=c_jc_j^*.
\]
Since $\sum_ju_ju_j^*=I_3$,
\begin{equation}\label{eq:rank-one-sums}
 \sum_{j=1}^3Y_j=CE^*=U,
 \qquad
 \sum_{j=1}^3|Y_j|=D^{-1},
 \qquad
 \sum_{j=1}^3|Y_j^*|=UDU^*.
\end{equation}
Moreover, $c_1\perp c_2$ and $d_1\perp d_2$. So, the initial spaces of $Y_1,Y_2$ are orthogonal, as are their final spaces. It follows that
\[
 |Y_1+Y_2|=|Y_1|+|Y_2|,
 \qquad
 |Y_1^*+Y_2^*|=|Y_1^*|+|Y_2^*|.
\]
Set $X_1=Y_1+Y_2$ and $X_2=Y_3$. By~\eqref{eq:rank-one-sums}, their sum is $U$ and
\begin{align*}
 \smod{X_1}+\smod{X_2}
 &=\frac{UDU^*+D^{-1}}2\\
 &=\diag\left(\frac{1+\varepsilon}{2},
               \varepsilon^{-1},
               \frac{1+\varepsilon}{2}\right).
\end{align*}
This proves~\eqref{eq:sym-example}.
\end{proof}

For the two matrices $X_1,X_2$ in Proposition~\ref{prop:sym-sharp}, whose sum is unitary, take
$X_1\oplus X_1$ and $X_2\oplus X_2$ in $\mathbb M_6$. The third largest eigenvalue of the sum of their symmetric moduli is again $(1+\varepsilon)/2$. This proves the optimality of $1/2$ in Theorem \ref{thm:sym-lower} for $(d,n)=(6,3)$.

\section{Strong form of Aujla--Bourin's theorem}
\label{sec:double}

Let $A,B\in\Mnp$ and let $f:[0,\infty)\to[0,\infty)$ be concave.  Aujla and Bourin~\cite[Theorem~2.1]{AB} proved that
$$
f(A+B) \le Uf(A)U^*+Vf(B)V^*
$$
for some unitaries $U$ and $V$. Taking traces gives
Rotfel'd's inequality~\cite{Rot69}:
\begin{equation}\label{Rot}
\Tr f(A+B) \le \Tr f(A) + \Tr f(B).
\end{equation}
This trace inequality also holds for real-valued concave
functions $f$ on $[0,\infty)$ with $f(0)\geq0$.
The unitary-orbit inequality need not hold under these weaker
assumptions; see Zhang~\cite{TZhang26b}.

We use the following result of Fulton~\cite{Fulton} to prove
a stronger form of the Aujla--Bourin inequality.
Let $Z,A_1,\ldots,A_m\in\Mn$ be Hermitian.
There exist unitaries $U_1,\ldots,U_m\in\Mn$ such that
\[
 Z\leq\sum_{t=1}^m U_tA_tU_t^*
\]
if and only if
\[
 \sum_{k\in K}\lambda_k(Z)
 \leq
 \sum_{t=1}^m\sum_{i\in I_t}\lambda_i(A_t)
\]
for every Horn tuple $(I_1,\ldots,I_m;K)$ of subsets of
$\{1,\ldots,n\}$ with common cardinality $1\leq r\leq n$.
Here the tuple with
$I_1=\cdots=I_m=K=\{1,\ldots,n\}$ is included; it gives
\[
 \Tr Z\leq\sum_{t=1}^m\Tr A_t.
\]
For the definition of the Horn tuples, see~\cite[Section~2]{Fulton}.
We use this convention throughout the section.

We first prove the following result.

\begin{lemma}\label{lem:completion}
Let \(F,X,Y\in\Mnp\). Suppose that
\[
    \lambda_i(X)\leq\lambda_i(F),
    \qquad 1\leq i\leq n,
\]
and that
\[
    F\leq U_0 X U_0^*+V_0 Y V_0^*
\]
for some unitaries \(U_0,V_0\in\Mn\).
Then, there exist unitaries \(U,V\in\Mn\) such that
\begin{equation}\label{eq:completion}
    0\leq F-UXU^*\leq VYV^*.
\end{equation}
\end{lemma}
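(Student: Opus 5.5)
The plan is to reduce \eqref{eq:completion} to the existence of a single positive semidefinite matrix $W$ with prescribed spectral constraints. We look for $W$ and a unitary $U$ such that
\[
 F=UXU^*+W,\qquad 0\leq\lambda_i(W)\leq\lambda_i(Y)\quad(1\leq i\leq n).
\]
Such a $W$ suffices for the following reasons. The condition $\lambda_n(W)\geq0$ gives $W\geq0$, which is the left half of \eqref{eq:completion}. The entrywise domination $\lambda_i(W)\leq\lambda_i(Y)$ gives a unitary $V$ with $W\leq VYV^*$, by the criterion recalled in the introduction. So everything hinges on choosing a spectrum $\delta=\lambda(W)$ with three properties:
\begin{itemize}
\item[(a)] $\delta$ is dominated entrywise by $\lambda(Y)$;
\item[(b)] $\delta$ is nonnegative;
\item[(c)] $(\lambda(X),\delta;\lambda(F))$ is the spectral triple of an exact sum, i.e.\ there exist Hermitian matrices with spectra $\lambda(X)$ and $\delta$ whose sum has spectrum $\lambda(F)$. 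By the Horn--Klyachko theorem this means the Horn equalities and inequalities hold for this triple.
\end{itemize}

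First I would reduce to integral data. The set of triples $(F,X,Y)$ for which the conclusion holds is closed, since the unitary group is compact and \eqref{eq:completion} is a closed condition. Scaling is harmless. So it suffices to treat the case where $\alpha=\lambda(X)$, $\beta=\lambda(Y)$, $\gamma=\lambda(F)$ are nonnegative integer vectors, i.e.\ partitions. To justify this reduction, approximate $\alpha,\beta,\gamma$ by rational vectors that keep the hypotheses. The hypothesis $\alpha\subseteq\gamma$ is kept by monotone rounding. The hypothesis that the Horn inequalities of Fulton's theorem hold for $F\leq U_0XU_0^*+V_0YV_0^*$ is kept by slightly enlarging $\beta$. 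The enlargement tends to zero, so passing to the limit is legitimate.

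In the integral setting, Fulton's theorem tells us that the hypothesis $F\leq U_0XU_0^*+V_0YV_0^*$ is equivalent to all Horn inequalities $\sum_K\gamma_k\leq\sum_{I}\alpha_i+\sum_J\beta_j$. By~\cite[Lemma~3]{Fulton} this is equivalent to the existence of a dominated vector $\delta\leq\beta$ (entrywise) with nonzero Littlewood--Richardson coefficient $c^{\gamma}_{\alpha\delta}$. Nonvanishing of $c^{\gamma}_{\alpha\delta}$ gives, by the saturation/Horn--Klyachko theorem, Hermitian matrices realizing $\gamma=\alpha+\delta$ exactly. After conjugation these are $F=UXU^*+W$ with $\lambda(W)=\delta$, which gives (a) and (c).

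The main obstacle is (b): Fulton's lemma a priori allows $\delta$ to have negative entries, and then $W$ would not be positive semidefinite. This is exactly where the extra hypothesis $\lambda_i(X)\leq\lambda_i(F)$, i.e.\ $\alpha\subseteq\gamma$ as Young diagrams, enters. I would invoke Buch's refinement recorded in~\cite[footnote~2]{Fulton}. When $\alpha\subseteq\gamma$ and the Horn inequalities hold, the dominated $\delta$ can be chosen to be a genuine partition, $\delta_n\geq0$, i.e.\ with $\gamma/\alpha$ a skew shape admitting an LR filling of content $\delta\subseteq\beta$. If needed, I would check this via the skew-tableau description. The skew Schur function $s_{\gamma/\alpha}$ is a nonnegative combination of $s_\delta$ over partitions $\delta$ with $|\delta|=|\gamma|-|\alpha|$. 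The Horn inequalities then force at least one such $\delta$ to satisfy $\delta\subseteq\beta$. Once $\delta\geq0$ is secured, $W\geq0$ follows, and the limiting argument completes the proof of \eqref{eq:completion}.
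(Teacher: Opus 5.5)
Your proposal is correct and follows essentially the same route as the paper. You reduce to finding $D=F-UXU^*\geq0$ with $\lambda_i(D)\leq\lambda_i(Y)$, treat integral spectra with Fulton's majorized Horn theorem, Buch's refinement under $\alpha\subseteq\gamma$, and the Horn--Littlewood--Richardson realization, and then pass to real spectra by approximation. One precision: the input Buch's refinement actually needs is the partition $\widehat\gamma\supseteq\gamma$ with $c^{\widehat\gamma}_{\alpha\beta}>0$ from Fulton's partition form, not a possibly negative $\delta$ from Lemma~3. The only other difference is cosmetic: you take the limit via compactness of the unitary group and an explicit rounding that enlarges $\beta$, while the paper uses density of rational points in the rational polyhedral cone and compactness of the bounded sequences $P_m,D_m$.
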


\begin{proof}

Put
\[
    \alpha=\lambda(X),\qquad
    \beta=\lambda(Y),\qquad
    \gamma=\lambda(F).
\]

We first prove that there exist positive semidefinite matrices
\(P,D\in\Mnp\) such that
\begin{equation}\label{eq:spectral-completion-revised}
    \lambda(P)=\alpha,\qquad
    \lambda_i(D)\leq\beta_i\quad(1\leq i\leq n),
    \qquad
    \lambda(P+D)=\gamma.
\end{equation}

We use Fulton's spectral characterization of the relation
\(C\leq A+B\); see~\cite[Theorem~1]{Fulton}.
By hypothesis,
\[
    F\leq U_0 X U_0^*+V_0 Y V_0^*.
\]
Hence the eigenvalue lists
\[
    \alpha=\lambda(X),\qquad
    \beta=\lambda(Y),\qquad
    \gamma=\lambda(F)
\]
form an admissible spectral triple for the relation \(C\leq A+B\).
By Fulton's spectral characterization, this is equivalent to saying
that \((\alpha,\beta,\gamma)\) satisfies all the majorized Horn
inequalities.

Moreover, by assumption,
\begin{equation}\label{eq:alpha-gamma}
    \alpha_i
    =\lambda_i(X)
    =\lambda_i(U_0 X U_0^*)
    \leq \lambda_i(F)
    =\gamma_i,
    \qquad 1\leq i\leq n.
\end{equation}

We first consider the case in which
\(\alpha,\beta,\gamma\) have rational entries. Multiplying all three
vectors by a common positive integer, and rescaling at the end, it is
enough to treat the case in which \(\alpha,\beta,\gamma\) are
partitions with at most \(n\) parts.

Write \(\mu\subseteq\nu\) if
\[
    \mu_i\leq\nu_i
    \qquad\text{for every }i,
\]
and denote by \(c_{\mu,\nu}^{\rho}\) the corresponding
Littlewood--Richardson coefficient.

Fulton's partition formulation of the majorized Horn theorem
\cite[Theorem~5 and the following remark]{Fulton} gives a partition
\(\widehat{\gamma}\) such that
\[
    \gamma\subseteq\widehat{\gamma}
    \qquad\text{and}\qquad
    c_{\alpha,\beta}^{\widehat{\gamma}}>0.
\]

Since $\alpha\subseteq\gamma\subseteq\widehat{\gamma}$,
Buch's refinement of Lemma~3, recorded in
\cite[footnote~2]{Fulton}, gives a partition
$\delta\subseteq\beta$ such that
\begin{equation}\label{eq:LR-refinement}
 c_{\alpha,\delta}^{\gamma}>0.
\end{equation}

The Horn--Littlewood--Richardson theorem now gives positive
semidefinite matrices \(P,D\in\Mnp\) such that
\[
    \lambda(P)=\alpha,\qquad
    \lambda(D)=\delta,\qquad
    \lambda(P+D)=\gamma.
\]

Since \(\delta\subseteq\beta\), we have
\[
    \lambda_i(D)=\delta_i\leq\beta_i,
    \qquad 1\leq i\leq n.
\]
Thus \eqref{eq:spectral-completion-revised} holds in the integral case,
and hence, after clearing denominators and rescaling, in the rational
case.

We now pass to arbitrary real spectra. Let \(\mathcal C\) denote the
set of triples \((a,b,c)\in\mathbb{R}^{3n}\) satisfying the majorized
Horn inequalities of Fulton, together with
\[
    a_1\geq\cdots\geq a_n\geq0,\qquad
    b_1\geq\cdots\geq b_n\geq0,\qquad
    c_1\geq\cdots\geq c_n\geq0,
\]
and
\[
    a_i\leq c_i,
    \qquad 1\leq i\leq n.
\]
All these inequalities have rational coefficients. Hence
\(\mathcal C\) is a rational polyhedral cone. 
By the preceding rational case, every rational point
\[
    (a,b,c)\in\mathcal C\cap\mathbb{Q}^{3n}
\]
is realized by positive semidefinite matrices \(P,D\) satisfying
\[
    \lambda(P)=a,\qquad
    \lambda_i(D)\leq b_i,\qquad
    \lambda(P+D)=c.
\]
Since \((\alpha,\beta,\gamma)\in\mathcal C\), and since rational
points are dense in the rational polyhedral cone \(\mathcal C\), there
exists a sequence of rational points
\[
    (\alpha^{(m)},\beta^{(m)},\gamma^{(m)})
    \in\mathcal C\cap\mathbb{Q}^{3n}
\]
such that
\[
    \alpha^{(m)}\longrightarrow\alpha,\qquad
    \beta^{(m)}\longrightarrow\beta,\qquad
    \gamma^{(m)}\longrightarrow\gamma.
\]
For each \(m\), choose positive semidefinite matrices
\(P_m,D_m\) such that
\[
    \lambda(P_m)=\alpha^{(m)},\qquad
    \lambda_i(D_m)\leq\beta_i^{(m)},\qquad
    \lambda(P_m+D_m)=\gamma^{(m)}.
\]
The sequences \((P_m)\) and \((D_m)\) are bounded. Indeed,
\[
    \|P_m\|
    =\lambda_1(P_m)
    =\alpha_1^{(m)}
\]
and
\[
    \|D_m\|
    =\lambda_1(D_m)
    \leq\beta_1^{(m)},
\]
and the right-hand sides remain bounded.
Since $\Mn$ is finite-dimensional and both sequences are bounded,
we may pass to a common subsequence such that
\[
    P_m\longrightarrow P,\qquad
    D_m\longrightarrow D
\]
for some \(P,D\in\Mnp\). 
Since the  eigenvalues depend continuously on the matrix, we
obtain
\[
    \lambda(P)
    =\lim_{m\to\infty}\lambda(P_m)
    =\alpha,
\]
and, for every \(i\),
\begin{equation}\label{eq:eigenvalue-continuity}
    \lambda_i(D)
    =\lim_{m\to\infty}\lambda_i(D_m)
    \leq
    \lim_{m\to\infty}\beta_i^{(m)}
    =\beta_i.
\end{equation}
Finally,
\[
    P_m+D_m\longrightarrow P+D,
\]
and therefore
\[
    \lambda(P+D)
    =
    \lim_{m\to\infty}\lambda(P_m+D_m)
    =
    \gamma.
\]
Thus \eqref{eq:spectral-completion-revised} holds for arbitrary real
spectra.

We now return to the original matrices \(F,X,Y\). Since
\(\lambda(P)=\lambda(X)\), there exists a unitary \(U\in\Mn\) such that
\[
    P=UXU^*.
\]
Since \(\lambda(P+D)=\lambda(F)\), there exists a unitary
\(W\in\Mn\) such that
\[
    F=W(P+D)W^*.
\]
Conjugating the matrices \(P,D\) by \(W\), and replacing \(U\) by
\(WU\), we may therefore assume, without loss of generality, that
\[
    F=P+D=UXU^*+D.
\]
In particular,
\[
    F-UXU^*=D\geq0.
\]
It remains to compare \(D\) with \(Y\). By \eqref{eq:eigenvalue-continuity},
\[
    \lambda_i(D)\leq\lambda_i(Y),
    \qquad 1\leq i\leq n.
\]
Equivalently, there exists a unitary \(V\in\Mn\) such that
\(
    D\leq VYV^*.
\)
Consequently,
\[
    0\leq F-UXU^*\leq VYV^*,
\]
which is \eqref{eq:completion}.
\end{proof}

\begin{theorem}\label{thm:double}

Let \(A,B\in\Mnp\) and let
\(f:[0,\infty)\to[0,\infty)\) be concave. Then, there exist unitaries
\(U,V\in\Mn\) such that
\begin{equation}\label{eq:double}
    0\leq f(A+B)-Uf(A)U^*
    \leq Vf(B)V^*.
\end{equation}
\end{theorem}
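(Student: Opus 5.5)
We will deduce Theorem~\ref{thm:double} directly from Lemma~\ref{lem:completion}, applied with
\[
 F=f(A+B),\qquad X=f(A),\qquad Y=f(B).
\]
All three matrices lie in $\Mnp$ because $f\geq0$. We need to check the two hypotheses of the lemma.

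The second hypothesis is exactly the Aujla--Bourin theorem~\cite[Theorem~2.1]{AB}. It supplies unitaries $U_0,V_0$ with
\[
 f(A+B)\leq U_0f(A)U_0^*+V_0f(B)V_0^*.
\]

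For the first hypothesis, $\lambda_i(f(A))\leq\lambda_i(f(A+B))$ for all $i$, we use monotonicity. A nonnegative concave function on $[0,\infty)$ is nondecreasing. Indeed, if $f(s)>f(t)$ for some $s<t$, concavity forces $f$ to decrease at least linearly beyond $t$, so $f$ eventually becomes negative, which is a contradiction.

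Since $B\geq0$, we have $A\leq A+B$. Weyl's monotonicity principle then gives $\lambda_i(A)\leq\lambda_i(A+B)$ for every $i$.

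For a nondecreasing $f$, the eigenvalues of $f(T)$ in nonincreasing order are $f(\lambda_1(T))\geq\cdots\geq f(\lambda_n(T))$. Therefore
\[
 \lambda_i(f(A))=f(\lambda_i(A))\leq f(\lambda_i(A+B))=\lambda_i(f(A+B)).
\]
Note that operator monotonicity of $f$ is not needed here, only ordinary monotonicity at the level of eigenvalues.

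With both hypotheses verified, Lemma~\ref{lem:completion} yields unitaries $U,V$ such that $0\leq f(A+B)-Uf(A)U^*\leq Vf(B)V^*$, which is~\eqref{eq:double}.

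The only point needing care is the monotonicity of $f$ together with the eigenvalue ordering; everything else is a direct citation. The real difficulty has already been handled inside Lemma~\ref{lem:completion}, through the Fulton and Buch Littlewood--Richardson argument.
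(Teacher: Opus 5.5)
Your proposal is correct and is essentially the paper's proof. Both apply Lemma~\ref{lem:completion} to $F=f(A+B)$, $X=f(A)$, $Y=f(B)$. The eigenvalue domination comes from $A\leq A+B$ together with the monotonicity of nonnegative concave $f$, and the unitary-orbit hypothesis is the Aujla--Bourin theorem. Your proposal also proves the monotonicity of $f$, which the paper only asserts.
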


\begin{proof}
Set
\[
    F=f(A+B),\qquad
    X=f(A),\qquad
    Y=f(B).
\]
These matrices are positive semidefinite.

Since \(A\leq A+B\), we have
\[
    \lambda_i(A)\leq\lambda_i(A+B),
    \qquad 1\leq i\leq n.
\]
As a nonnegative concave function on \([0,\infty)\) is
nondecreasing, we infer
\[
    \lambda_i(X)
    =f(\lambda_i(A))
    \leq f(\lambda_i(A+B))
    =\lambda_i(F),
    \qquad 1\leq i\leq n.
\]
By the theorem of Aujla and Bourin
there exist unitaries
\(U_0,V_0\in\Mn\) such that
\[
    F\leq U_0 X U_0^*+V_0 Y V_0^*,
\]
and Lemma~\ref{lem:completion} now gives
\eqref{eq:double}.
\end{proof}

\begin{remark}
\label{rem:functional-horn}
By Fulton's spectral characterization, the Aujla--Bourin
inequality is equivalent to
\[
\sum_{k\in K} f\bigl(\lambda_k(A+B)\bigr)
\leq
\sum_{i\in I} f\bigl(\lambda_i(A)\bigr)
+
\sum_{j\in J} f\bigl(\lambda_j(B)\bigr)
\]
for every Horn tuple $(I,J;K)$.
Applying a concave function to the usual Horn inequalities does
not by itself give these inequalities. For results on concave
functions and Horn inequalities, see Zhou~\cite{JZhou26}.
\end{remark}

Theorem~\ref{thm:double} cannot in general be strengthened by requiring
$Vf(B)V^*\leq f(A+B)$ for the same unitaries $U,V$.
The following example shows this already in dimension $2$.

\begin{proposition}\label{prop:triple-counterexample}
Let
\[
 f(t)=\frac{t}{1+t}\quad(t\geq0),
 \qquad
 A=\begin{pmatrix}1&0\\0&0\end{pmatrix},
 \qquad
 B=\frac12\begin{pmatrix}1&1\\1&1\end{pmatrix}.
\]
There are no unitaries $U,V\in\mathbb M_2(\mathbb C)$ for which all three
inequalities
\begin{equation}\label{eq:triple-comparison}
 \begin{split}
 f(A+B)&\leq Uf(A)U^*+Vf(B)V^*,\\
 Uf(A)U^*&\leq f(A+B),\\
 Vf(B)V^*&\leq f(A+B)
 \end{split}
\end{equation}
hold. The matrices $A,B$ are real rank-one orthogonal projections,
and $f$ is nonnegative, smooth, strictly increasing and strictly
concave on $[0,\infty)$, with $f(0)=0$.
\end{proposition}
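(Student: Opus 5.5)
The plan is to reduce everything to explicit rank-one geometry in $\mathbb C^2$, since $f(A)$ and $f(B)$ are scalar multiples of rank-one projections. Because $f(1)=1/2$ and $f(0)=0$, we have $f(A)=\tfrac12 e_1e_1^*$ and $f(B)=\tfrac12 ww^*$ with $w=(e_1+e_2)/\sqrt2$. Hence for any unitaries,
\[
 Uf(A)U^*=\tfrac12 pp^*,\qquad Vf(B)V^*=\tfrac12 qq^*,
\]
with $p=Ue_1$ and $q=Vw$ arbitrary unit vectors. Next I compute $F=f(A+B)$. The matrix $A+B$ has trace $2$ and determinant $1/2$, so its eigenvalues are $1\pm s$ with $s=1/\sqrt2$. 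Therefore $F$ has eigenvalues
\[
 \mu_\pm=\frac{1\pm s}{2\pm s},
\]
with orthonormal eigenvectors $g_\pm$. Numerically $\mu_+\approx0.631$ and $\mu_-\approx0.227$, so $F$ is invertible.

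Next, translate the second and third inequalities in~\eqref{eq:triple-comparison}. For invertible $F$ and a unit vector $x$, $\tfrac12xx^*\le F$ holds iff $\tfrac12\langle F^{-1}x,x\rangle\le1$. Writing $|\langle x,g_-\rangle|^2=\sin^2\theta_x$, this condition reads
\[
 \frac{\cos^2\theta_x}{\mu_+}+\frac{\sin^2\theta_x}{\mu_-}\le 2 .
\]
It is equivalent to $\sin^2\theta_x\le t_0:=\bigl(2-\mu_+^{-1}\bigr)\big/\bigl(\mu_-^{-1}-\mu_+^{-1}\bigr)$. Using $\mu_+^{-1}=(2+s)/(1+s)$ and $\mu_-^{-1}=(2-s)/(1-s)$, one finds $t_0\approx0.146$, so $\theta_0=\arcsin\sqrt{t_0}\approx22.5^\circ$. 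I expect exactly $\theta_0=\pi/8$, which I will confirm by simplifying $t_0$. So both $p$ and $q$ must lie within Fubini--Study angle $\theta_0$ of the line $\mathbb C g_+$. By the triangle inequality for the Fubini--Study angle $\arccos|\langle\cdot,\cdot\rangle|$, this gives
\[
 |\langle p,q\rangle|\ge\cos(2\theta_0)\approx0.707 .
\]

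Finally, use the first inequality. If $F\le\tfrac12(pp^*+qq^*)$, then by Weyl monotonicity $\mu_-=\lambda_2(F)\le\lambda_2\bigl(\tfrac12(pp^*+qq^*)\bigr)$. The matrix $pp^*+qq^*$ has eigenvalues $1\pm|\langle p,q\rangle|$, so this says $\mu_-\le(1-|\langle p,q\rangle|)/2$, i.e.\ $|\langle p,q\rangle|\le1-2\mu_-\approx0.547$. This contradicts the lower bound $0.707$ from the previous step, so no unitaries satisfy all three inequalities.

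The main obstacle is making the numerical gap rigorous with clean exact expressions. That means computing $t_0$ in closed form, ideally $\sin^2\theta_0=(2-\sqrt2)/4$ so that $\cos2\theta_0=1/\sqrt2$, and checking $1-2\mu_-<\cos2\theta_0$ exactly. With $s=1/\sqrt2$ both sides are explicit surds, so this reduces to an elementary inequality. If $t_0$ turns out not to be that nice, I will simply bound it rationally; the margin between $0.547$ and $0.707$ is comfortable. The remaining statements (that $A,B$ are real rank-one projections, and that $f$ is smooth, increasing, strictly concave with $f(0)=0$) are immediate.
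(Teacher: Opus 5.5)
Your argument is correct, and the constants come out exactly as you hoped. Indeed $\mu_+^{-1}=3-\sqrt2$ and $\mu_-^{-1}=3+\sqrt2$, so $t_0=(\sqrt2-1)/(2\sqrt2)=(2-\sqrt2)/4=\sin^2(\pi/8)$. Likewise $1-2\mu_-=(1+2\sqrt2)/7<1/\sqrt2$ reduces to $\sqrt2<3$. The Fubini--Study step can also be done by hand: expanding $p,q$ in the basis $g_+,g_-$ gives $|\langle p,q\rangle|\geq\cos\theta_p\cos\theta_q-\sin\theta_p\sin\theta_q=\cos(\theta_p+\theta_q)$.

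The paper takes a different, shorter route. With $X=Uf(A)U^*$ and $Y=Vf(B)V^*$, it sets $R=F^{-1/2}XF^{-1/2}$ and $S=F^{-1/2}YF^{-1/2}$. These are rank-one with $0\leq R,S\leq I$, hence $\Tr R,\Tr S\leq 1$. Since $R+S\geq I_2$ forces $\Tr(R+S)\geq 2$, one gets $R+S=I$, i.e.\ $X+Y=F$. This contradicts $\Tr F=6/7<1=\Tr f(A)+\Tr f(B)$.

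That argument needs only the invertibility and the trace of $F$, with no eigenvalues or eigenvectors. It also shows the structural reason: in $\mathbb M_2$ with rank-one $X,Y$, the three inequalities force equality in Rotfel'd's trace inequality. So any such example where that inequality is strict will do.

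Your argument is more computational and tied to these particular matrices. On the other hand, it uses from the first inequality only the consequence $\lambda_2(F)\leq\lambda_2(X+Y)$. It therefore also excludes the weaker situation where $F\leq X+Y$ is only required up to unitary conjugation, $F\leq W(X+Y)W^*$. The paper's trace argument does not directly handle that case, since it relies on the literal operator inequality $R+S\geq I$.
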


\begin{proof}
We have $A^2=A=A^*$, $B^2=B=B^*$ and
$\Tr A=\Tr B=1$. Also, the function $f(t)=1-\frac{1}{1+t}$ is obvioulsy strictly increasing and
strictly concave on $[0,\infty)$; it is also operator monotone
and operator concave.
Since $f(0)=0$ and $f(1)=1/2$,
\[
 f(A)=\frac12A,
 \qquad
 f(B)=\frac12B.
\]
Using $f(T)=I-(I+T)^{-1}$ for $T\geq0$, we obtain
\[
 F:=f(A+B)
 =I-(I+A+B)^{-1}
 =\frac17\begin{pmatrix}4&1\\1&2\end{pmatrix}>0,
 \qquad \Tr F=\frac67.
\]
Suppose that $U,V$ satisfy~\eqref{eq:triple-comparison}, and set
\[
 X=Uf(A)U^*,\qquad Y=Vf(B)V^*,
\]
and
\[
 R=F^{-1/2}XF^{-1/2},\qquad S=F^{-1/2}YF^{-1/2}.
\]
Both $R$ and $S$ have rank one. The three inequalities imply
\[
 0\leq R\leq I,\qquad 0\leq S\leq I,\qquad R+S\geq I.
\]
A positive semidefinite rank-one matrix bounded above by $I$ has
trace at most one. Consequently,
\[
 2\leq\Tr(R+S)=\Tr R+\Tr S\leq2.
\]
Thus $R+S-I\geq0$ has trace zero, so $R+S=I$.
Multiplying on both sides by $F^{1/2}$ gives $X+Y=F$.
Taking traces now yields the contradiction
\[
 1=\Tr f(A)+\Tr f(B)=\Tr(X+Y)=\Tr F=\frac67.
\]
\end{proof}

\section{The disk-radius coefficient for positive block matrices}
\label{sec:block}

Let
\[
\mathcal B=\begin{bmatrix}A&X\\X^*&B\end{bmatrix}\geq0,
\qquad A,B,X\in\Mn,
\]
and let $f:[0,\infty)\to[0,\infty)$ be concave.
For every Schatten $p$-norm, $1\leq p\leq\infty$, we have
\[
\|f(\mathcal B)\|_p\leq\|f(A)\|_p+\|f(B)\|_p;
\]
see~\cite[Corollary~3.5]{BL12}.
For $p=1$ and $X=A^{1/2}B^{1/2}$, this implies
Rotfel'd's inequality~\eqref{Rot}. 

\begingroup

In another direction,
\endgroup
Bourin and Lee proved the following estimate~\cite[Corollary~2.4]{BourinLee2022}. If
\[
 \mathcal A=\begin{bmatrix}A&N\\N^*&B\end{bmatrix}\geq0,
 \qquad A,B,N\in\Mn,
\]
$N$ is normal, and $\spec(N)$ lies in a disk of radius $r$, then
\begin{equation}\label{eq:block-bound}
 \lambda_{1+2j}(\mathcal A)
 \leq\lambda_{1+j}(A+B)+r,
 \qquad j=0,\ldots,n-1.
\end{equation}
Their Question~2.5 asks whether the coefficient of $r$ can be reduced and whether it is already best possible for $j=0$.

\begin{theorem}\label{thm:radius-sharp}
For every $r>0$ and $\varepsilon>0$, there are $A,B\in\mathbb M_3^+$ and a normal $N\in\mathbb M_3$ such that
\[
 \begin{bmatrix}A&N\\N^*&B\end{bmatrix}\geq0,
 \qquad
 \spec(N)\subseteq\{z\in\mathbb C:|z|\leq r\},
\]
and
\begin{equation}\label{eq:radius-sharp}
 \lambda_1\!\left(\begin{bmatrix}A&N\\N^*&B\end{bmatrix}\right)
 >\lambda_1(A+B)+r-\varepsilon.
\end{equation}
Hence the coefficient of $r$ in~\eqref{eq:block-bound} is best possible already for $j=0$.
\end{theorem}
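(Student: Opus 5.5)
The plan is to reduce to $r=1$ and then build an explicit $3\times3$ family depending on a parameter $t\to\infty$; the essential difficulty is choosing the off-diagonal structure of $A$ and $B$. By homogeneity, replacing $(A,B,N)$ by $(rA,rB,rN)$ scales both sides of~\eqref{eq:radius-sharp} by $r$, so I take $r=1$. For $N$ I take a unitary with eigenvalues on the unit circle: the cyclic shift $U$ of Proposition~\ref{prop:sym-sharp}, or equivalently, after a unitary change of basis, $N=\diag(1,\omega,\omega^2)$ with $\omega=e^{2\pi i/3}$. Then $N$ is normal and $\spec(N)\subseteq\{|z|\le1\}$. Positivity of the block is automatic if I define it as a Gram matrix
\[
\begin{bmatrix}A&N\\N^*&B\end{bmatrix}=\begin{bmatrix}C^*\\E^*\end{bmatrix}\begin{bmatrix}C&E\end{bmatrix}
=\begin{bmatrix}C^*C&C^*E\\E^*C&E^*E\end{bmatrix},
\]
with the single constraint $C^*E=N$. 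The nonzero spectrum of the block then equals that of $CC^*+EE^*$. This is the same mechanism as in Proposition~\ref{prop:sym-sharp}, where $CE^*=U$ and the sums of moduli were $D^{-1}$ and $UDU^*$.

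The minimal choice $B=N^*A^{-1}N$ with $A$ diagonal fails. The block eigenvalues are then $a_i+a_i^{-1}$, and pairing the large $a_i$ in $A+B$ always recovers it, so no gain of $r$ appears. The construction must therefore make $A$ and $B$ non-commuting, so that their large directions are \emph{different} while the block still "sees" the coupling. Concretely, I will take $C=M^{1/2}$ with $M$ a positive matrix having one large eigenvalue $t$ along a direction $u$, and set $E=M^{-1/2}N$, plus a small correction term. Then $A=M$ and $B=N^*M^{-1}N$, and $N$ rotates $u$ so that $N^*M^{-1}N$ is large in a direction nearly orthogonal to $u$. To avoid pure $M^{-1}$ behaviour, I will add a rank-one positive perturbation to $B$, which preserves positivity.

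The lower bound on $\lambda_1$ of the block will come from the test vector $(x,y)$ with $x=u/\sqrt2$ and $y=N^*u/\sqrt2$. This gives
\[
\lambda_1(\mathcal A)\ \ge\ \tfrac12\bigl(\langle Au,u\rangle+\langle BN^*u,N^*u\rangle\bigr)+\operatorname{Re}\langle NN^*u,u\rangle
=\tfrac12\bigl(\langle Au,u\rangle+\langle BN^*u,N^*u\rangle\bigr)+1 .
\]
The target is then to arrange $\langle Au,u\rangle\approx\langle BN^*u,N^*u\rangle\approx\lambda_1(A+B)$. In other words, $A$ and $B$ each have their top eigenvalue nearly equal to $\lambda_1(A+B)$, attained at the two different vectors $u$ and $N^*u$. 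This forces $A+B$ to have its top part spread over the two-dimensional span of $u$ and $N^*u$, with $\lambda_1(A+B)\approx\lambda_1(A)\approx\lambda_1(B)$. That is possible only if the cross term $\langle Au,N^*u\rangle+\langle BN^*u,u\rangle$ nearly cancels. Arranging this cancellation is exactly where the phases $\omega^j$ of the normal $N$ enter, as in the proof of Theorem~\ref{thm:k-over-four}. I will tune one parameter $t\to\infty$, with the small entries of order $t^{-1}$, and check that $\lambda_1(A+B)=t+O(t^{-1/2})$ while the test-vector value is $t+1+O(t^{-1/2})$.

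The main obstacle is this simultaneous control. I need a concrete $3\times3$ choice for which $\lambda_1(A+B)$ can be computed or bounded above sharply, for instance via a Gershgorin or Schur-complement estimate on a nearly block-diagonal $A+B$, while the Gram constraint $C^*E=N$ keeps the block positive. The third dimension is needed because in $\mathbb M_2$ the normality of $N$ forces the commuting obstruction described above. If the direct ansatz fails, I will first try the three-parameter family $A=\diag(t,\varepsilon,\varepsilon^{-1})$-type Gram data from Proposition~\ref{prop:sym-sharp}, conjugated by the Fourier basis $f_j$ of Proposition~\ref{prop:ordinary-zero}, and optimize the parameters numerically before proving the asymptotics.
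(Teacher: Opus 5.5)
Your proposal has a genuine gap: it never produces an explicit triple $(A,B,N)$, although the theorem is an existence statement whose whole content is such an example. Your last paragraph even falls back on a numerical search. Worse, the ingredients you do fix cannot produce the example.

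\begin{itemize}
\item With $C,E\in\mathbb M_3$ and $C^*E=N$ unitary, $C$ is invertible and $E=C^{-*}N$ is forced. So there is no room for a ``small correction'', and necessarily $B=N^*A^{-1}N$.
\item This minimal choice fails for \emph{every} positive definite $A$, whether or not it commutes with $B$. The nonzero eigenvalues of the block are the numbers $a+a^{-1}$ with $a$ an eigenvalue of $A$, and their maximum is attained at $a=\lambda_1(A)$ or $a=\lambda_3(A)$. Weyl's inequality gives both $\lambda_1(A+N^*A^{-1}N)\geq\lambda_1(A)+\lambda_1(A)^{-1}$ and $\lambda_1(A+N^*A^{-1}N)\geq\lambda_3(A)^{-1}+\lambda_3(A)$. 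So non-commutativity is not the remedy, and the phase cancellation through $\omega^j$ is a red herring.
\item Your test vector needs $\langle BN^*u,N^*u\rangle\approx t$. But $\langle N^*A^{-1}N\,N^*u,N^*u\rangle=\langle A^{-1}u,u\rangle\approx t^{-1}$, and no perturbation with entries of order $t^{-1}$ bridges that.
\end{itemize}

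The missing idea has two parts. First, the rank-one positive boost of $B$ along $N^*u$ must be of order $t$; this means a fourth row in your Gram factor, and indeed the extremal block has rank $4$. Second, the third coordinate must absorb the positivity cost without raising $\lambda_1(A+B)$. The paper does this with diagonal, hence commuting, $A$ and $B$. It takes $N=U$ the cyclic shift and
\[
 A_T=\diag\bigl(T,\tfrac2T,\tfrac T2\bigr),\qquad
 B_T=U^*\diag\bigl(T,\tfrac T2,\tfrac2T\bigr)U=\diag\bigl(\tfrac2T,T,\tfrac T2\bigr).
\]
In your language, $B_T=U^*A_T^{-1}U+(T-T^{-1})e_2e_2^*$ with $u=e_1$ and $N^*u=e_2$.
\begin{itemize}
\item Positivity forces $B\geq U^*A^{-1}U$. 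The small entry $2/T$ of $A$ at $e_2$ therefore forces $B\geq T/2$ at $e_3$, and this is balanced by taking $A=T/2$ there.
\item Conjugating by $\diag(I_3,U)$ turns the block into $\begin{bmatrix}A_T&I_3\\I_3&C_T\end{bmatrix}$ with $C_T=\diag(T,\tfrac T2,\tfrac2T)$. This splits into the $2\times2$ blocks $\begin{bmatrix}a_i&1\\1&c_i\end{bmatrix}$. The pair $(T,T)$ gives the eigenvalue $T+1$, and the two pairs with $a_ic_i=1$ are positive semidefinite.
\item Since $A_T+B_T=\diag(T+\tfrac2T,T+\tfrac2T,T)$, the gap is exactly $1-2/T$, and scaling by $r$ gives the theorem.
\end{itemize}
Your test vector $(e_1,e_2)/\sqrt2$ does attain $T+1$ here. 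The cross term vanishes simply because $e_1\perp e_2$ and $A_T,B_T$ are diagonal.
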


\begin{proof}
Let $T>2$ and put
\[
 U=\begin{bmatrix}0&1&0\\0&0&1\\1&0&0\end{bmatrix},
 \qquad
 A_T=\diag\left(T,\frac2T,\frac T2\right),
 \qquad
 C_T=\diag\left(T,\frac T2,\frac2T\right).
\]
Define
\[
 B_T=U^*C_TU,
 \qquad
 \mathcal A_T=\begin{bmatrix}A_T&U\\U^*&B_T\end{bmatrix}.
\]
Then,
\[
 \diag(I_3,U)\,\mathcal A_T\,\diag(I_3,U^*)
 =\begin{bmatrix}A_T&I_3\\I_3&C_T\end{bmatrix}.
\]
After a permutation of the coordinates, the matrix on the right is the direct sum of
\[
 \begin{bmatrix}T&1\\1&T\end{bmatrix},
 \qquad
 \begin{bmatrix}2/T&1\\1&T/2\end{bmatrix},
 \qquad
 \begin{bmatrix}T/2&1\\1&2/T\end{bmatrix}.
\]
All three blocks are positive semidefinite. Their eigenvalues give
\[
 \spec(\mathcal A_T)
 =\left\{T+1,\ T-1,\ \frac T2+\frac2T,\ \frac T2+\frac2T,\ 0,\ 0\right\},
\]
so $\lambda_1(\mathcal A_T)=T+1$. On the other hand,
\[
 A_T+B_T=\diag\left(T+\frac2T,T+\frac2T,T\right).
\]
It follows that
\begin{equation}\label{eq:radius-gap}
 \lambda_1(\mathcal A_T)-\lambda_1(A_T+B_T)=1-\frac2T.
\end{equation}
Take $A=rA_T$, $B=rB_T$, and $N=rU$. Since $U$ is unitary, $N$ is normal and its spectrum lies in the closed disk of radius $r$ centered at zero. The block matrix is $r\mathcal A_T\geq0$. By~\eqref{eq:radius-gap}, the difference between the largest eigenvalues of $r\mathcal A_T$ and $A+B$ equals $r(1-2/T)$. Any
\[
 T>\max\{2,2r/\varepsilon\}
\]
therefore gives~\eqref{eq:radius-sharp}.
\end{proof}

\begin{remark}\label{rem:two-by-two-blocks}
For a normal off-diagonal block of size $2$, the term $r$
in~\eqref{eq:block-bound} can be omitted.
Indeed, the numerical range of a normal matrix $N\in\mathbb M_2$
is the line segment joining its two eigenvalues, possibly a single point.
Its elliptical width is therefore zero. By
\cite[Theorem~2.1 and the discussion following Corollary~3.1]{BourinLee2022},
if
\[
 \mathcal A=\begin{bmatrix}A&N\\N^*&B\end{bmatrix}\geq0,
 \qquad A,B,N\in\mathbb M_2,
\]
with $N$ normal, then
\[
 \lambda_{1+2j}(\mathcal A)\leq\lambda_{1+j}(A+B),
 \qquad j=0,1.
\]
For scalar blocks, $\lambda_1(\mathcal A)\leq\Tr\mathcal A=A+B$.
Hence blocks of size at least $3$ are needed for the positive gap
in Theorem~\ref{thm:radius-sharp}.
\end{remark}

\section{\texorpdfstring{Anti Lie--Trotter and Kato power limit theorems}{Anti Lie--Trotter and Kato power limit theorems}}
\label{sec:limit}

In this last section, we use
 \begingroup
block matrices, more precisely
\endgroup
 a unitary dilation to derive a three-matrix version of the
reciprocal Lie--Trotter theorem of Audenaert and Hiai. Their theorem states that
\begin{equation}\label{eq:AH}
 \lim_{p\to\infty}\bigl(R^{p/2}S^pR^{p/2}\bigr)^{1/p}
\end{equation}
exists in norm for positive semidefinite matrices $R,S$ of the same size~\cite[Theorem~2.5]{AudenaertHiai}. The following corollary answers the question posed
after~\cite[Corollary~2.6]{BourinLee2016}.

\begin{proposition}\label{prop:matrix-limit}
For $A,B\in\Mnp$ and $Z\in\Mn$, the limit
\begin{equation}\label{eq:power-limit}
 \lim_{p\to\infty}\bigl(A^pZ^*B^pZA^p\bigr)^{1/p}
\end{equation}
exists in norm.
\end{proposition}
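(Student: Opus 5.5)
The plan is to reduce the three-matrix expression to the two-matrix reciprocal Lie--Trotter limit~\eqref{eq:AH} by a unitary dilation of $Z$.

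First, a normalisation. If $Z=0$ there is nothing to prove. Otherwise, replacing $Z$ by $Z/\norm{Z}$ multiplies $\bigl(A^pZ^*B^pZA^p\bigr)^{1/p}$ by the scalar $\norm{Z}^{2/p}$. Since $\norm{Z}^{2/p}\to1$ as $p\to\infty$, and the unscaled matrices stay bounded, convergence in norm is unaffected. So we may assume that $Z$ is a contraction.

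Next, dilate $Z$ to the Halmos unitary
\[
 W=\begin{bmatrix}Z&(I-ZZ^*)^{1/2}\\(I-Z^*Z)^{1/2}&-Z^*\end{bmatrix}\in\mathbb M_{2n}.
\]
Its unitarity follows from the intertwining relation $Z(I-Z^*Z)^{1/2}=(I-ZZ^*)^{1/2}Z$. Now set
\[
 R=A^2\oplus0_n\in\mathbb M_{2n}^+,
 \qquad
 S=W^*(B\oplus0_n)W\in\mathbb M_{2n}^+.
\]
Because $W$ is unitary, $S^p=W^*(B^p\oplus0_n)W$. The $(1,1)$ block of $S^p$ is $Z^*B^pZ$. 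Moreover $R^{p/2}=A^p\oplus0_n$ for $p>0$, so compressing to the first summand gives
\[
 R^{p/2}S^pR^{p/2}=\bigl(A^pZ^*B^pZA^p\bigr)\oplus0_n .
\]
The continuous functional calculus respects direct sums. Hence
\[
 \bigl(R^{p/2}S^pR^{p/2}\bigr)^{1/p}=\bigl(A^pZ^*B^pZA^p\bigr)^{1/p}\oplus0_n .
\]

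By the theorem of Audenaert and Hiai~\cite[Theorem~2.5]{AudenaertHiai}, applied to $R,S\in\mathbb M_{2n}^+$, the left-hand side converges in norm as $p\to\infty$. Compressing to the first $n$ coordinates is norm-continuous, so the limit~\eqref{eq:power-limit} exists.

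The only point needing care is the identification of the $(1,1)$ block of $S^p$ with $Z^*B^pZ$. It holds because $S$ is a unitary conjugate of $B\oplus0_n$, so the $p$-th power passes inside the conjugation. This is exactly why the dilation must be unitary rather than merely isometric. Everything else is routine bookkeeping.
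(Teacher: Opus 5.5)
Your proof is correct and follows the paper's route: normalize $Z$ to a contraction, dilate it to a unitary $W\in\mathbb M_{2n}$ with upper-left block $Z$, set $R=A^2\oplus 0_n$ and $S=W^*(B\oplus 0_n)W$, and apply the Audenaert--Hiai theorem. The differences are cosmetic: you write the Halmos dilation explicitly, whereas the paper completes the columns of the isometry $\begin{bmatrix}K\\(I_n-K^*K)^{1/2}\end{bmatrix}$ to some unitary, and the paper normalizes by $\max\{1,\norm{Z}\}$ rather than $\norm{Z}$.
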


\begin{proof}
Put $c=\max\{1,\norm{Z}\}$ and $K=Z/c$. Complete the columns of the isometry
\[
 \begin{bmatrix}K\\(I_n-K^*K)^{1/2}\end{bmatrix}
\]
to a unitary $W\in\mathbb M_{2n}$. Its upper-left $n\times n$ block is $K$. Set
\[
 R=A^2\oplus0_n,
 \qquad
 S=W^*(B\oplus0_n)W.
\]
These matrices are positive semidefinite, and for every $p>0$,
\[
 R^{p/2}S^pR^{p/2}
 =\bigl(A^pK^*B^pKA^p\bigr)\oplus0_n.
\]
Taking positive $p$th roots and applying~\eqref{eq:AH}, we obtain the norm convergence of $(A^pK^*B^pKA^p)^{1/p}$. Finally,
\[
 \bigl(A^pZ^*B^pZA^p\bigr)^{1/p}
 =c^{2/p}\bigl(A^pK^*B^pKA^p\bigr)^{1/p},
\]
~and $c^{2/p}\to1$.
\end{proof}

\begingroup

We may state a more general form of the reciporocal Lie-Trotter formuale of Audenaert-Hiai. This is the mutivariate version in our last theorem.

\begin{theorem}\label{thm:matrix-limit}
For $A,B_1,\ldots,B_m\in\Mnp$ and $Z_1,\ldots,Z_m\in\Mn$, the limit
\begin{equation}\label{eq:sum-power-limit}
 \lim_{p\to\infty}\left(A^p\left\{\sum_{k=1}^mZ_k^*B_k^pZ_k\right\}A^p\right)^{1/p}
\end{equation}
exists in norm.
\end{theorem}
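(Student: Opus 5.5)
The plan is to reduce Theorem~\ref{thm:matrix-limit} to Proposition~\ref{prop:matrix-limit} (and hence to the Audenaert--Hiai theorem~\eqref{eq:AH}) by a block-matrix device. It absorbs the $m$ terms into a single term of size $mn$.

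Work in $\mathbb M_{mn}=\mathbb M_n\otimes\mathbb M_m$. Put
\[
 \widetilde A=A\oplus 0_{(m-1)n},
 \qquad
 \widetilde B=B_1\oplus B_2\oplus\cdots\oplus B_m .
\]
Let $\widetilde Z\in\mathbb M_{mn}$ be the block matrix whose first block column is $(Z_1,Z_2,\ldots,Z_m)^{\mathsf T}$ and whose other blocks are zero. Then $\widetilde B^p=B_1^p\oplus\cdots\oplus B_m^p$ for every $p>0$, and $\widetilde Z^*\widetilde B^p\widetilde Z$ has upper-left block $\sum_kZ_k^*B_k^pZ_k$ and zeros elsewhere. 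Since $\widetilde A^p=A^p\oplus0$, it follows that
\[
 \widetilde A^p\widetilde Z^*\widetilde B^p\widetilde Z\widetilde A^p
 =\Bigl(A^p\Bigl\{\sum_{k=1}^mZ_k^*B_k^pZ_k\Bigr\}A^p\Bigr)\oplus 0_{(m-1)n}.
\]

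Both sides are positive semidefinite and of direct-sum form. Taking $p$th roots therefore commutes with the direct sum, because the continuous functional calculus respects block-diagonal decompositions, and $0^{1/p}=0$. Proposition~\ref{prop:matrix-limit} applies with size $mn$ and the data $\widetilde A,\widetilde B\in\mathbb M_{mn}^+$, $\widetilde Z\in\mathbb M_{mn}$. It gives norm convergence of $(\widetilde A^p\widetilde Z^*\widetilde B^p\widetilde Z\widetilde A^p)^{1/p}$. Compressing to the upper-left $n\times n$ corner is norm-continuous, so the limit~\eqref{eq:sum-power-limit} exists in norm.

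I do not expect a real obstacle. The only point needing care is the exact block identity above. One must check that $\widetilde Z^*\widetilde B^p\widetilde Z$ produces the sum $\sum_kZ_k^*B_k^pZ_k$ and not cross terms. It does, because $\widetilde B^p$ is block diagonal and $\widetilde Z$ has a single nonzero block column, so the $(1,1)$ entry is $\sum_k Z_k^*B_k^pZ_k$ and all other entries vanish.

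One could alternatively redo the unitary dilation of Proposition~\ref{prop:matrix-limit} directly, using the isometry built from $(Z_1,\ldots,Z_m)^{\mathsf T}/c$. The reduction above avoids repeating that argument.
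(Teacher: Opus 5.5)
Your proposal is correct and is essentially the paper's proof. The paper uses the same block matrices $\widetilde A=A\oplus 0$, $\widetilde B=B_1\oplus\cdots\oplus B_m$, and $\widetilde Z$ with first block column $Z_1,\ldots,Z_m$, applies Proposition~\ref{prop:matrix-limit} in $\mathbb M_{mn}$, and concludes from norm convergence of the upper-left block.
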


\vskip 5pt
\begin{proof} Following the proof of \cite[Corollary 2.7]{BourinLee2016} we apply Proposition~\ref{prop:matrix-limit} to the matrices
$\tilde{A},\tilde{B},\tilde{Z}\in \bM_{mn}$,
\begin{equation*}\label{eqblock}
\tilde{A}=\begin{bmatrix} A& 0_n&\cdots &0_n \\
0_n &0_n&\cdots &0_n \\
\vdots &\vdots &\ddots &\vdots \\
0_n& 0_n&\cdots &0_n \\
\end{bmatrix}, \
\tilde{B}=\begin{bmatrix} B_1& 0_n&\cdots &0_n \\
0_n &B_2&\cdots &0_n \\
\vdots &\vdots &\ddots &\vdots \\
0_n& 0_n&\cdots &B_m \\
\end{bmatrix}, \
\tilde{Z}=\begin{bmatrix} Z_1& 0_n&\cdots &0_n \\
Z_2 &0_n&\cdots &0_n \\
\vdots &\vdots &\ddots &\vdots \\
Z_m& 0_n&\cdots &0_n \\
\end{bmatrix}
\end{equation*}
where $0_n$ stands for the zero matrix in $\bM_n$.
Convergence of the block matrices in norm entails the convergence
of the upper-left block and completes the proof.
\end{proof}
\endgroup

\begin{remark}\label{rem:kato-limit}
Taking $A=Z_1=\cdots=Z_m=I$ in
Theorem~\ref{thm:matrix-limit} gives the norm convergence of
$(B_1^p+\cdots+B_m^p)^{1/p}$ for $B_1,\ldots,B_m\in\Mnp$.
Kato's limit theorem~\cite{Kato1979} identifies this limit as
\begin{equation}\label{eq:kato-limit}
 \lim_{p\to\infty}(B_1^p+\cdots+B_m^p)^{1/p}
 =B_1\vee\cdots\vee B_m,
\end{equation}
where the right-hand side denotes the supremum with respect
to the spectral order (Olson order).
\end{remark}

\section*{Acknowledgements}
We are very grateful to the referee for a careful reading of
the manuscript and for detailed comments and suggestions
that improved the paper and its presentation.

\section*{Declarations}

\noindent\textbf{Competing interests.}
The authors declare that they have no competing interests.

\medskip

\noindent\textbf{Funding.}
This research received no external funding.

\medskip

\noindent\textbf{Data availability statement.}
Data sharing is not applicable to this article, as no datasets were generated or analyzed during the current study.

\end{document}